\def\namedlabel#1#2{\begingroup
#2%
\def\@currentlabel{#2}%
\phantomsection\label{#1}\endgroup
}
\theoremstyle{theorem}
\newtheorem{theorem}{Theorem}[section]
\newtheorem*{theorem*}{Theorem}
\newtheorem{lemma}[theorem]{Lemma}
\newtheorem{proposition}[theorem]{Proposition}
\newtheorem{theoremx}{Theorem}
\theoremstyle{definition}
\newtheorem{definition}[theorem]{Definition}
\newtheorem{example}[theorem]{Example}
\newtheorem{remark}[theorem]{Remark}
\numberwithin{equation}{subsection}
\renewcommand{\(}{\left(}
\renewcommand{\)}{\right)}
\newcommand{\ZZ}{\mathbb{Z}}
\newcommand{\LL}{L}
\newcommand{\V}{\mathbb{V}}
\newcommand{\m}{\mathfrak{m}}
\newcommand{\n}{\mathfrak{n}}
\newcommand{\p}{\mathfrak{p}}
\newcommand{\fa}{\mathfrak{a}}
\newcommand{\q}{\mathfrak{q}}
\renewcommand{\r}{\mathfrak{r}}
\newcommand{\ideala}{\mathfrak{a}}
\newcommand{\idealb}{\mathfrak{b}}
\renewcommand{\k}{\Bbbk}
\newcommand{\kk}{K}
\newcommand{\Spec}{\operatorname{Spec}}
\newcommand{\Hom}{\operatorname{Hom}}
\newcommand{\Ext}{\operatorname{Ext}}
\newcommand{\cd}{\operatorname{cd}}
\newcommand{\InjDim}{\operatorname{inj.dim}}
\newcommand{\Supp}{\operatorname{Supp}}
\newcommand{\Depth}{\operatorname{depth}}
\newcommand{\Ker}{\operatorname{Ker}}
\newcommand{\IM}{\operatorname{Im}}	
\newcommand{\Ht}{\operatorname{height}}	
\newcommand{\Char}{\operatorname{char}}	
\newcommand{\FDer}[1]{\stackrel{#1}{\to}}
\newcommand{\onto}{\twoheadrightarrow}
\newcommand{\newLyu}{\widetilde{\lambda}}
\newcommand{\ctrOne}{t}
\newcommand{\ctrTwo}{h}
\newcommand{\down}[1]{\left\lfloor #1 \right\rfloor}
\definecolor{blue-violet}{rgb}{0.54, 0.17, 0.89}
\definecolor{Blue}{rgb}{0.01, 0.28, 1.0}
\definecolor{gGreen}{rgb}{0.2, 0.8, 0.2}
\definecolor{Green}{rgb}{0.04, 0.85, 0.32}
\begin{document}

\title[Cohomological dimension, Lyubeznik numbers, and connectivity]{Cohomological dimension, Lyubeznik numbers, and connectedness in mixed characteristic}
\author[D.\ J.\ Hern\'andez]{Daniel J.\ Hern\'andez}
\author[L.\ N\'u\~nez-Betancourt]{Luis N\'u\~nez-Betancourt}
\author[F.\ P\'erez]{Felipe P\'erez}
\author[E.\ E.\ Witt]{Emily E.\ Witt}
\maketitle

%%%%%%%%%%%%%%%%%%%%%

\begin{abstract}
We establish a ``second vanishing theorem'' for local cohomology modules over regular rings of unramified mixed characteristic, which relates the connectedness of the spectrum of a ring with the vanishing of local cohomology.  
Applying this, and new results on the mixed characteristic Lyubeznik numbers, 
we further study connectedness properties of the spectra of a certain class of rings.
\end{abstract}

%%%%%%%%%%%%%%%%%%%%%

\section{Introduction}

%%%%%%%%%%%%%%%%%

The goal of this paper is to relate connectedness properties of the spectrum of a ring to local cohomology modules.  
Our focus is on rings that do not contain a field, where some important known results in this direction do no apply.
An analog of the Lyubeznik numbers called the \emph{mixed characteristic Lyubeznik numbers} serve as a major tool.  

%%%%%%%%%%%%%%%%%

\subsection{History and motivation}
One compelling reason for studying local cohomology is its relationship with the geometry and topology of algebraic varieties.  
Recall that the \emph{cohomological dimension} of an ideal $I$ of a Noetherian ring $S$, denoted $\cd(I, S)$, is the maximum index $\ctrOne \geq0$ for which the local cohomology module $H^\ctrOne_I(S)$ is nonzero.
One of the first 
 illustrations of the noted relationship, due to Grothendieck, is that the cohomological dimension of the maximal ideal of a local ring coincides with the ring's dimension \cite{HartshorneLC,SGA2}.

Hochster and Huneke extended Faltings' connectedness theorem to link local cohomology and the topology of a ring's spectrum \cite{FalNagoya,FalPRIMS,HHgraph}.
 Given an $n$-dimensional complete local domain $S$, they showed that if the cohomological dimension of an ideal $I$ of $S$ is at most $n - 2$,  then the punctured spectrum of $S/I$ is connected.  
Crucial to their proof is the Hartshorne-Lichtenbaum vanishing theorem (often abbreviated ``HLVT'')  \cite{HartshorneCD}, which in this case says that $H^n_I(S)$ vanishes if and only if $\dim(S/I)>0$. 
Since for any ideal $I$ of $S$, $H^i_I(S) = 0$ for all $i > n$, one may regard the HLVT as a ``first vanishing theorem'' for local cohomology in the sense that it characterizes the vanishing of the local cohomology modules of $S$ at the highest index for which such a module may be nonzero.

Under the additional hypothesis that the ring $S$ contains a field, the ``second vanishing theorem''  (or ``SVT'') of local cohomology provides a converse to the aforementioned extension of Faltings' connectedness theorem:
Let $S$ be a complete regular local ring of dimension $n$ with a separably closed residue field, which it contains. Let $I\subseteq S$ be an ideal such that $\dim(S/I)\geq 2$. Then $H^{n-1}_I(S)=0$  if and only if the punctured spectrum of $S/I$ is connected 
 \cite{HartshorneCD,Ogus,P-S,H-L}.

The SVT has consequences beyond detecting whether the punctured spectrum of a ring is connected. 
For instance, it allows one to relate the Lyubeznik numbers, certain Bass numbers of local cohomology modules, to a graph that measures further connectedness properties of a ring's spectrum \cite{Walther2,LyuInvariants,W}.  
It is also a key ingredient in showing that for an ideal $I$ of a polynomial ring $S$ in $n$ variables over a field of characteristic zero, if 
$\Depth(S/I) \geq 3$, then  $\cd(I, S)\leq n-2$ \cite{VarbaroCD}.

%%%%%%%%%%%%%%%%%%%%%%%%%

\subsection{A second vanishing theorem in mixed characteristic}
It is natural to ask whether there is an analogous SVT characterizing the connectedness of the punctured spectra of rings containing no field.
The Cohen structure theorems allow one to write any complete local ring of mixed characteristic as the quotient of an unramified regular local ring of mixed characteristic \cite{Cohen}.
In light of this, our first goal (and the focus of Section \ref{second-vanishing: S}) is the following.

%%%%%%%

\begin{theoremx}[SVT over unramified rings of mixed characteristic; cf.\ Theorem \ref{SVTUnramifiedGeneral: T}] \label{SVTUnramified: T}
Let $S$ be an $n$-dimensional complete unramified regular local ring of mixed characteristic, which has a separably closed residue field. 
Let $I$ be an ideal of $S$ for which $R=S/I$ is equidimensional, and $\dim(R) \geq 3$. 
Then
$H^{n-1}_I(S)=0$ if and only if the punctured spectrum of $R$ is connected.
\end{theoremx}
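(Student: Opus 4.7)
I would prove the two implications separately; the forward direction follows quickly from known connectedness results, while the reverse direction carries the real technical weight.

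\emph{Forward direction.} Suppose $H^{n-1}_I(S)=0$. Since $S$ is a complete regular local domain of dimension $n$ and $\dim(R)\geq 3>0$, the Hartshorne-Lichtenbaum vanishing theorem yields $H^n_I(S)=0$, hence $\cd(I,S)\leq n-2$. The Hochster-Huneke generalization of Faltings' connectedness theorem recalled in the introduction then immediately gives that the punctured spectrum of $R$ is connected.

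\emph{Reverse direction.} Assume the punctured spectrum of $R$ is connected; the goal is $H^{n-1}_I(S)=0$. The plan is to split this into a support reduction and a socle calculation.

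\emph{Stage 1 (support).} I would first show $\Supp_S H^{n-1}_I(S)\subseteq\{\m\}$. For any prime $\p\subsetneq\m$, the localization $(H^{n-1}_I(S))_\p = H^{n-1}_{IS_\p}(S_\p)$ vanishes automatically by Grothendieck when $\Ht(\p)<n-1$. When $\Ht(\p)=n-1$, equidimensionality of $R$ forces every minimal prime of $I$ to have height $n-\dim(R)\leq n-3$, so $\Ht(IS_\p)\leq n-3$ and $\dim(S_\p/IS_\p)\geq 2>0$; HLVT applied to the complete regular local domain $S_\p$ then kills $H^{n-1}_{IS_\p}(S_\p)$. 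Therefore $H^{n-1}_I(S)$ is $\m$-torsion, and it vanishes if and only if its socle $\Hom_S(S/\m,H^{n-1}_I(S))$ does.

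\emph{Stage 2 (socle).} This socle dimension is precisely a mixed characteristic Lyubeznik number of $R$, and the strategy is to show that connectedness of the punctured spectrum of $R$ forces it to be zero. A useful structural input comes from reducing modulo the residue characteristic $p$: the long exact sequence associated to $0\to S\xrightarrow{p}S\to S/pS\to 0$, combined with $H^n_I(S)=0$, produces an isomorphism
\[
H^{n-1}_I(S)/p\,H^{n-1}_I(S)\;\cong\;H^{n-1}_{I(S/pS)}(S/pS).
\]
The ring $T:=S/pS$ is a complete regular local ring of equal characteristic $p$, dimension $n-1$, with separably closed residue field, and $\dim(T/IT)\geq\dim(R)-1\geq 2>0$, so HLVT for $T$ forces the right-hand side to vanish. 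Thus $H^{n-1}_I(S)$ is $p$-divisible. The socle vanishing should then be extracted by pairing this $p$-divisibility with the connectedness hypothesis through the mixed characteristic Lyubeznik number machinery developed earlier in the paper, ideally via Matlis duality so that $p$-divisibility becomes a $p$-adic separatedness condition on the dual and the connectedness hypothesis becomes a vanishing statement about specific Bass numbers of iterated local cohomology modules.

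\emph{Main obstacle.} The heart of the argument is Stage 2. In equal characteristic the analogous socle-vanishing step is powered by Lyubeznik's F-module finiteness (in characteristic $p$) or by D-module theory (in characteristic $0$), each of which extracts the Lyubeznik number directly from the topology of $R$. Neither tool is available on $S$ itself in unramified mixed characteristic, so the link between connectedness of the punctured spectrum of $R$ and vanishing of $\Hom_S(S/\m,H^{n-1}_I(S))$ must be supplied by the new mixed characteristic Lyubeznik number theory — which is exactly where I expect the bulk of the technical work to go.
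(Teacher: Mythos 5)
Your forward direction is fine and essentially parallel to the paper's (the paper runs a Mayer--Vietoris argument and contradicts HLVT; you invoke the Hochster--Huneke form of Faltings' connectedness theorem, which the paper also cites in the introduction). Stage 1 of your reverse direction — the localization argument showing $\Supp_S H^{n-1}_I(S)\subseteq\{\m\}$ — is also correct. The real problem is Stage 2, which you candidly leave open, and that is where all the content of the theorem lives.

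Two specific issues with Stage 2. First, $p$-divisibility of $H^{n-1}_I(S)$ cannot, on its own, yield vanishing: the module $E_S(K)$ is $p$-divisible and $\m$-torsion, and that is exactly the ambient noise one must rule out. You need the connectedness hypothesis to actually enter the argument, and the gesture toward ``Matlis duality $+$ Bass numbers of iterated local cohomology'' does not say how. Second, the identity you are implicitly after — that $\newLyu_{0,1}(S/I)$ equals one less than the number of connected components of $\Spec^\circ(S/I)$ — is precisely Proposition \ref{connectedComponents: P}, which the paper proves as a \emph{consequence} of the SVT (Theorem \ref{SVTUnramifiedGeneral: T}), so invoking it here would be circular.

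The paper's reverse direction takes a different route that avoids socle vanishing entirely: it inducts on the number of minimal primes of $I$, with Mayer--Vietoris handling the inductive step and Lemma \ref{SVTPrime: L} handling the base case of a prime $\q$. The prime case splits on whether $p\in\q$. When $p\in\q$, the reduction $A=S/pS$ is a complete equicharacteristic regular local ring, and the equicharacteristic SVT of Huneke--Lyubeznik together with HLVT give $\cd(A,\q A)\leq n-3$; Lemma \ref{cohomologicalDimensionReduction: L} (which proves multiplication by $p$ is \emph{injective} on $H^i_I(S)$ for $i>\cd(A,IA)+1$, then uses $p$-torsion to conclude vanishing — the opposite flavor of your $p$-divisibility observation) lifts this to $\cd(S,\q)\leq n-2$. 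When $p\notin\q$, the key idea you are missing is Proposition \ref{HHGraphParameter: P} (Zhang's result): since $S/\q$ is a complete local domain and $p$ is a nonzero parameter, the Hochster--Huneke graph of $S/(\q+pS)$ is connected, hence $\Theta_{S/(\q+pS)}$ is connected, hence $\Spec^\circ(S/(\q+pS))$ is connected. This lets one apply the equicharacteristic SVT to $J=\q+pS$ in $A$, kill $H^{n-1}_J(S)$ via Lemma \ref{cohomologicalDimensionReduction: L}, and then finish by showing $H^{n-1}_\q(S_p)=0$ via HLVT on each localization $S_\p$ with $p\notin\p$. That graph-theoretic step is the technical heart that your proposal does not supply.
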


%%%%%%%

\noindent Please see Theorem \ref{SVTUnramifiedGeneral: T} for a more general statement. 

We note that when recently announcing this result at a conference,\footnote{\href{https://sites.google.com/site/dmodulesincommutativealgebra/home}{Conference on $D$-modules in Commutative Algebra}, Centro de Investigaci\'on en Matem\'aticas (CIMAT) in Guanajuato, Mexico from August 10 - 14, 2015.} 
we learned from W.\ Zhang that he has independently proven Theorem \ref{SVTUnramified: T} using different techniques. 

Although our principal motivation for seeking such a theorem is to better understand the topology of a ring's spectrum,  cohomological dimension is itself an influential research topic.
It has connections with, for instance, the number of equations defining a variety \cite{BroSharp,TwentyFourHours}, depth \cite{P-S,LyuVan,VarbaroCD,DaoTakagi}, and the vanishing of singular and algebraic de Rham cohomology \cite{GarciaSabbah, LyuLC, SwitalaNonsingular}. 
The new SVT may be a useful tool for extending results previously known only in equal characteristic to the mixed characteristic setting. 
For example, we apply it in proving Theorem \ref{cdSVTconsequence: T}, a mixed characteristic version of a theorem of Huneke and Lyubeznik on cohomological dimension \cite{H-L}. 

%%%%%%%%%%%%%%%%%%%%%%%%%%

\subsection{Connectedness of spectra and Lyubeznik numbers}
The Lyubeznik numbers are numerical invariants associated  to a local ring $R$ containing a field, defined via local cohomology \cite{LyuDMod}.   
If $R$ has a separably closed residue field, its highest Lyubeznik number equals the number of connected components of its Hochster-Huneke graph \cite{LyuInvariants, W}.  
Assuming that $R$ is equidimensional, this graph is connected if and only if the connectedness dimension of $R$ is at least $\dim(R) -1$,  i.e., 
$\Spec(R)\setminus Z$ is connected for every closed set $Z$ of $\Spec(R)$ for which $\dim(Z) \leq \dim(R)-2$ \cite{HHgraph}.  
If $R$ is a complete intersection on each point of its punctured spectrum, the highest Lyubeznik number, in fact, equals the number of connected components of the punctured spectrum of $R$ \cite{GarciaSabbah, B-B,BlickleCCI}.

Our second objective (and the focus of Section \ref{SecIsoSing: S}) is to show that the \emph{mixed characteristic Lyubeznik numbers}, recently-defined numerical invariants of local rings whose residue fields have prime characteristic,
are related to connectedness properties of spectra \cite{NuWiMixChar}.  We prove the following. 

%%%%%%%

\begin{theoremx}[{cf.\ Theorem \ref{connectedComponentsLyu: T}}]\label{connectedComponentsLyuIntro: T}
Let $R$ be a complete local ring that is a complete intersection at every point of its punctured spectrum, with a separably closed residue field of prime characteristic, and for which $\dim(R) \geq 4$. 
Then the highest mixed characteristic Lyubeznik number of $R$ equals the number of connected components of its punctured spectrum.
\end{theoremx}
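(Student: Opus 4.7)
The plan is to use a Mayer--Vietoris decomposition of local cohomology aligned with the connected components of the punctured spectrum, together with the second vanishing theorem (Theorem~\ref{SVTUnramified: T}), to compute the top mixed characteristic Lyubeznik number as the number of such components. By Cohen's structure theorem, one can write $R = S/I$ with $S$ an $n$-dimensional complete unramified regular local ring of mixed characteristic and residue field $k$. Let $U_1,\dots,U_c$ be the connected components of the punctured spectrum of $R$, and for each $j$ let $I_j \subseteq S$ be the preimage of $\fa_j := \bigcap_{\p \in U_j} \p \subseteq R$. Since the $U_j$ are disjoint, any sum of two or more distinct $I_j$'s is $\m$-primary, and $\sqrt{I} = \bigcap_j \sqrt{I_j}$.

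Next, I would apply the Mayer--Vietoris spectral sequence for local cohomology to the covering $\sqrt{I} = \bigcap_j \sqrt{I_j}$. All ``higher intersection'' terms involve local cohomology at $\m$-primary ideals, which equal $H^\bullet_\m(S)$ and so vanish outside the top degree $n$. Combined with $\dim R \geq 4$, this degenerates the spectral sequence in the relevant range and produces a direct sum decomposition of the local cohomology modules of $S$ appearing in the definition of the top mixed characteristic Lyubeznik number. By naturality, this decomposition is preserved under the further local cohomology and $\Ext_S(k,-)$ operations used to define the invariant in \cite{NuWiMixChar}, so by additivity of Bass numbers one expects
\[
\widetilde{\lambda}_{\mathrm{top}}(R) \;=\; \sum_{j=1}^{c} \widetilde{\lambda}_{\mathrm{top}}(S/I_j).
\]

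For each $j$, the ring $S/I_j$ inherits the complete intersection property at each point of its punctured spectrum, and its punctured spectrum $U_j$ is connected by construction. Applying Theorem~\ref{SVTUnramified: T} (together with a direct analysis in the small-dimension case) would then yield $H^{n-1}_{I_j}(S) = 0$, and combining this vanishing with the CI hypothesis and a Matlis-duality/socle-calculation argument in mixed characteristic (in the spirit of the equal characteristic treatments using $F$-modules or $D$-modules) would identify each summand $\widetilde{\lambda}_{\mathrm{top}}(S/I_j)$ as $1$, giving $\widetilde{\lambda}_{\mathrm{top}}(R) = c$. The main obstacle I anticipate is precisely this final identification: in equal characteristic it is carried out via $F$- or $D$-module structure theory, so developing a sufficient mixed characteristic analog---or a direct argument combining the new SVT with Bass number bounds coming from the connectedness of $U_j$---is the technical heart of the proof. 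The hypothesis $\dim R \geq 4$ is needed both so that Theorem~\ref{SVTUnramified: T} can be applied componentwise to each $S/I_j$ of ``top'' dimension and so that the Mayer--Vietoris collapse occurs in the correct cohomological range.
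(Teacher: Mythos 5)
Your overall plan is structurally sound, and in fact the Mayer--Vietoris reduction you propose is a valid alternative to the route the paper takes. Since $J_i + J_j$ is $\m$-primary for $i\ne j$ and $\dim(S/J_j)\ge 4$, the Mayer--Vietoris (or iterated) sequences plus HLVT do give $H^{n-d}_I(S)\cong\bigoplus_j H^{n-d}_{J_j}(S)$, hence $\widetilde\lambda_{d,d}(S/I)=\sum_j\widetilde\lambda_{d,d}(S/J_j)$; and one can check each $J_j$ is again a CCI ideal on $\Spec^\circ(S)$ with connected punctured spectrum. (You would also want to invoke Proposition \ref{CCIequidimensional: L} so that each $\dim(S/J_j)=d$, and not merely $\ge 1$; otherwise the components could have different dimensions and the sum would not be of ``top'' Lyubeznik numbers.) The paper itself does not decompose by connected components: it works with $I$ directly, first computing $\widetilde\lambda_{0,1}(S/I)=t-1$ via Proposition~\ref{connectedComponents: P} and the new SVT, and then invoking the duality $\widetilde\lambda_{d,d}(S/I)=\widetilde\lambda_{0,1}(S/I)+1$ of Theorem~\ref{highestLowestLyu: T}.

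The genuine gap is exactly the step you flag as ``the technical heart'' and then leave unresolved: you have no argument that $\widetilde\lambda_{d,d}(S/J_j)=1$ when $J_j$ is a CCI ideal on $\Spec^\circ(S)$ with connected punctured spectrum. In the paper this is precisely the content of Theorem~\ref{highestLowestLyu: T} (combined with $\widetilde\lambda_{0,1}=0$ from the SVT), and its proof is far from a routine ``Matlis-duality/socle-calculation'': it requires the spectral sequence identity $H^i_\n(H^{n-d}_I(S))\cong H^{n-d+i-1}_I(S)$ for $i<d$ and $H^d_\n(H^{n-d}_I(S))\cong H^{n-1}_I(S)\oplus E_S(K)$ (Lemma~\ref{iteratedLC: L}, relying on Zhou's $\InjDim H^{n-d}_I(S)=d$), the injectivity of $H^{n-2}_I(S)$ (Proposition~\ref{LCnMinus2Injective: P}, proved via a $D(S,V)$-module length count on the surjection $H^{n-2}_I(A)\twoheadrightarrow\ker(p\cdot\ )\subseteq H^{n-1}_I(S)$), and Zhou's result that the images in $H^0_\n$ of a minimal injective resolution of $H^{n-d}_I(S)$ are injective (Remark~\ref{ZhouProofInjective: R}). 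None of these has an obvious replacement by an ``$F$-module in the spirit of equal characteristic'' argument, because $S$ has mixed characteristic. Your stated reason for the hypothesis $\dim R\ge 4$ is also slightly off: the SVT only needs $\dim\ge 3$; the bound $\ge 4$ enters in Proposition~\ref{LCnMinus2Injective: P}, where one must be able to cut by $p$ (and possibly another element) and still have the residual ring of dimension $\ge 3$ so the equal characteristic SVT applies.
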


%%%%%%%

 Our result is actually more general, applying to \emph{cohomologically complete intersection ideals} on the punctured spectrum of a ring, a notion introduced by Hellus and Schenzel \cite{CCI}.
Toward the same goal, we also show that when the highest mixed characteristic Lyubeznik is as small as possible (\emph{i.e.}, equals one), then
the spectrum of the ring is ``highly connected.''  We refer the reader to Theorem \ref{connectednessDimensionLyu: T} for our precise statement.

\section{Background} \label{Sec: Background}

%%%%%%%%%%%%%%%%%%%%%%%%%%%%%%%%%%%%

\subsection{Connectedness properties of spectra}  \label{Connectedness: SS}

%%%%%%%%%%%%%%%%%%%%%%%%%%%%%%%%%%%%

 Let $(S, \n)$ be a local ring, fix an ideals $I \subseteq J$ of $S$, and let $X = \Spec(S/I)$. 
Recall that open subset of a topological space is connected if it cannot be represented as the union of two disjoint, nonempty open sets.  
Translating this condition algebraically, for an ideal $J$ of $S$,  $X \setminus \mathbb{V}(J)$ is connected in the Zariski topology if and only if the following property holds:
given ideals $\ideala$ and $\idealb$ of $S$ for which 
\[
\sqrt{\ideala \cap \idealb} = \sqrt{I} \  \text{and}  \ \sqrt{\ideala + \idealb} = \sqrt{J}  \ \text{in}  \ S,
\]
one of $\sqrt{\ideala}$ and $\sqrt{\idealb}$ must be $\sqrt{J}$ (and equivalently, one of these must be $\sqrt{I}$).  
Indeed, $\V(\ideala)$ and $\V(\idealb)$ are disjoint, nonempty open and closed sets of $X \setminus \mathbb{V}(J)$, whose union is this entire space.

In particular, the punctured spectrum of $\Spec^\circ(S/I)$ is connected if and only if whenever $\ideala$ and $\idealb$ are ideals of $S$ for which $\sqrt{\ideala \cap \idealb} = \sqrt{I}$ and $\sqrt{\ideala + \idealb}  = \n$, one of  $\ideala$ or $\idealb$ must be $\n$-primary.

The \emph{connectedness dimension} of a Noetherian ring $R$, denoted $c(R)$, is defined as
\[
c(R) = \min\{ \dim(Z) \mid Z \subseteq \Spec(R) \ \text{closed and} \ \Spec(R) \setminus Z \ \text{ disconnected} \}
\]
\cite[Definition 19.1.9]{BroSharp}.  Note that since $\Spec(R) \setminus \Spec(R) = \emptyset$ is (trivially) disconnected, $c(R) \leq \dim(R)$.
We refer the reader to \cite[Chapter 19]{BroSharp} for more on the study of connectedness dimension via local cohomology, and in particular, on cohomological dimension.

\subsection{Long exact sequences in local cohomology} \label{Loc Coh} 
%%%%%%%%%%%%%%%%%%%%%%%%%%%%%%%%%%%%
Let $R$ be a Noetherian ring.  Given an ideal $I$ of $R$, fix generators $f_1, \ldots, f_\ctrOne$ for $I$.  
The \emph{$j^\text{th}$ local cohomology module of an $R$-module $M$ with support in $I$}, denoted $H^j_I(M)$, can be computed as the $j^\text{th}$ cohomology module of a \v{C}ech-like complex
\[
0\to M\to \bigoplus_{i=1}^\ctrOne M_{f_i}\to \bigoplus_{1 \leq i<\ell \leq \ctrOne} M_{f_i f_{\ell}}\to \ldots \to M_{f_1 \cdots f_\ctrOne}\to 0,
\]
where the homomorphism on each summand, up to a sign, is the appropriate further localization map.

Local cohomology modules serve both as the major objects of study, and as important tools, in this paper.  
In order to understand them, we apply three types of long exact sequences associated to these modules.  
First, given an ideal $I$ of $R$ and a short exact sequence of $R$-modules $0 \to M \to N \to P \to 0$,
there is a functorial long exact sequence of the form
\[ \cdots \to H^j_I(M) \to  H^j_I(N) \to  H^j_I(P) \to H^{j+1}_I(M) \to \cdots. \]
Given $f \in R$ and an $R$-module $M$, there is also a long exact sequence, functorial in $M$, 
\begin{equation*} \label{LCLESelement}
\cdots \to H^j_{I+ f R}(M) \to H^j_I(M) \to H^j_I(M_f) \to H^{j+1}_{I+ f R}(M) \to\cdots,
\end{equation*}
where the map from $H^j_I(M)$ to $H^j_I(M_f) \cong H^j_I(M)_f$ is the natural localization map.
Finally, the Mayer-Vietoris sequence in local cohomology is especially useful in the study of connectedness properties of $\Spec(R)$:  given ideals $I$ and $J$ of $R$, and $R$-module $M$, there is a long exact sequence of the form
\begin{equation*} \label{LC-LES-MV}
\cdots \to H^j_{I+ J}(M) \to H^j_I(M)\oplus H^j_J(M)\to H^j_{I\cap J} (M) \to\cdots.
\end{equation*}

%%%%%%%%%%%%%%%%%%%%%%%%%%%%%%%%%%%%

\subsection{Lyubeznik numbers and mixed characteristic Lyubeznik numbers.} \label{Subsec: LyuNumbers} 
Given a local ring $(R,\m, K)$ containing a field, the Cohen structure theorems allow us to write $\widehat{R} \cong S / I$, where $I$ is an ideal of a complete regular local ring $(S, \n, K)$ containing a field.  
Given integers $i, j \geq 0$, if $S$ is $n$-dimensional, then the  \emph{Lyubeznik number of $R$ with respect to $i$ and $j$} is
\[ \lambda_{i,j}(R)=\dim_K \Ext^i_S(K,H_I^{n-j}(S)), \]
which depends only on $R$, $i$, and $j$ \cite{LyuDMod}. 
Note that $\lambda_{i,j}(R)$ is the $i^\text{th}$ Bass number of $H_I^{n-j}(S)$ with respect to $\n$, which is finite \cite{Huneke,LyuDMod}. 
If $d = \dim(R)$, then $\lambda_{d, d}(R)\neq 0$ and $\lambda_{i, j}(R) = 0$ if either $i > d$ or $j> d$ \cite{LyuDMod}.  
For this reason, $\lambda_{d, d}(R)$ is called the \emph{highest Lyubeznik number} of $R$.

Now take any local ring $(R,\m,K)$ for which $K$ has positive characteristic; i.e., $R$ does not necessarily contain a field.  In this setting, we can write $\widehat{R} \cong T/I$, where $I$ is an ideal of an unramified regular local ring $(T, \n, K)$ of mixed characteristic \cite{Cohen}.  
For integers $i, j \geq 0$, if $n = \dim(T)$, the \emph{mixed characteristic Lyubeznik number with respect to $i$ and $j$} is the $i^\text{th}$ Bass number of $H_I^{n-j}(T)$ with respect to $\n$,
\[
\newLyu_{i,j}(R)=\dim_K \Ext_T^i(K,H_I^{n-j}(T)) 
\]
\cite{NuWiMixChar}, which is also finite \cite{LyuUMC, NunezPR}.
Again, this number is independent of the presentation of $\widehat{R}$.  
If $d = \dim(R)$, the \emph{highest mixed characteristic Lyubeznik number} is $\newLyu_{d,d}(R)$; indeed, $\newLyu_{i,j}(R) = 0$ if either $i$ or $j$ exceed $d$ \cite{NuWiMixChar}.

It is important to note that for local rings of characteristic $p$, both families of numerical invariants are defined; there are many circumstances in which they agree, but they do not always coincide \cite{NuWiMixChar, InjDim}.

\section{A second vanishing theorem over unramified  regular rings of \\ mixed characteristic} \label{second-vanishing: S}

%%%%%%%%%%%%%%%

The primary goal of this section is to prove a second vanishing theorem (SVT) for local cohomology modules over unramified regular local rings of mixed characteristic. 
Like the SVT over regular local rings containing a field, it characterizes the connectedness of punctured spectra through the vanishing of local cohomology.

%%%%%%%%%%%%%%%
%%%%%%%%%%%%%%%

\subsection{Two graphs}
In this subsection, we recall some facts about two graphs associated to a local ring.  
Both are simple graphs, meaning that they are undirected, contain at most one edge between any two vertices, and have no loops.

A particular simplicial complex has been used to relate connectedness, cohomological dimension, and depth \cite{LyuLC,ExtHar}.  
If a local ring $(R,\m)$ has minimal primes $\p_1, \ldots, \p_t$, then this simplicial complex has vertices labeled $1, \ldots, t$;  
$\{ i_1, \ldots, i_j \} \subseteq [t]$ is a face of  $\Delta(R)$ precisely if $\p_{i_1} + \cdots + \p_{i_j}$ is not $\m$-primary.
The first graph we consider is the one-skeleton of $\Delta(R)$.

%%%%%%%%%%%%%%%

\begin{definition}[The graph $\Theta_R$]
Given a local ring $(R, \m)$ with minimal primes $\p_1, \ldots, \p_\ctrOne$, the graph $\Theta_R$ has vertices labeled $1, \ldots, \ctrOne$, and 
there is an edge between two (distinct) vertices $i$ and $j$ precisely if $\p_i + \p_j$ is not $\m$-primary. 
\end{definition}

%%%%%%%%%%%%%%%

Our proof of the new SVT exploits the fact that this graph detects connectedness of the punctured spectrum, which 
Huneke and Lyubeznik originally pointed out in the proof of \cite[Theorem 2.9]{H-L}.

\begin{lemma}[Huneke\,--\,Lyubeznik]  \label{ThetaConnected: L}
Given a complete local ring $R$, the graph $\Theta_R$ is connected if and only if the punctured spectrum of $R$ is connected.
\end{lemma}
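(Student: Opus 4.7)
The plan is to work directly with the topological characterization of connectedness recalled in Section~\ref{Connectedness: SS}, translating it via the finite set of minimal primes $\p_1, \ldots, \p_\ctrOne$ of the complete local ring $R$. The case $\dim(R) = 0$ is trivial, since then there is a unique minimal prime, $\Theta_R$ is a single vertex, and the punctured spectrum is empty; so I would assume $\dim(R) \geq 1$, which ensures $\m \neq \p_i$ for every $i$.

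For the forward direction (connected punctured spectrum implies connected $\Theta_R$, proved contrapositively), I would begin with ideals $\ideala$ and $\idealb$ witnessing disconnectedness of the punctured spectrum, that is, satisfying $\sqrt{\ideala \cap \idealb} = \sqrt{(0)}$ and $\sqrt{\ideala + \idealb} = \m$, with neither being $\m$-primary. Since $\ideala\cdot\idealb$ lies in every $\p_i$, primality of $\p_i$ places each $i$ in $A := \{i : \ideala \subseteq \p_i\}$ or in $B := \{i : \idealb \subseteq \p_i\}$. The sets $A$ and $B$ are disjoint, for any common index would force $\p_i \supseteq \sqrt{\ideala + \idealb} = \m$, contradicting $\dim(R) \geq 1$; and both are nonempty, since $A = \emptyset$ would make $\idealb \subseteq \sqrt{(0)}$ and hence $\sqrt{\ideala} = \m$. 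For $i \in A$ and $j \in B$, the containment $\p_i + \p_j \supseteq \ideala + \idealb$ forces $\p_i + \p_j$ to be $\m$-primary, so $A \sqcup B$ is a disconnection of $\Theta_R$.

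For the converse, I would reverse the construction. Starting from a disconnection $\{1, \ldots, \ctrOne\} = A \sqcup B$ of $\Theta_R$, I set $\ideala = \bigcap_{i \in A} \p_i$ and $\idealb = \bigcap_{j \in B} \p_j$. The identity $\sqrt{\ideala \cap \idealb} = \sqrt{(0)}$ is immediate. To show $\sqrt{\ideala + \idealb} = \m$, I would apply prime avoidance: any prime $\p$ containing $\ideala + \idealb$ must contain some $\p_i$ with $i \in A$ and some $\p_j$ with $j \in B$, hence contains the $\m$-primary ideal $\p_i + \p_j$, forcing $\p = \m$. Since each $\p_i$ is strictly contained in $\m$, neither $\sqrt{\ideala} = \bigcap_{i \in A} \p_i$ nor $\sqrt{\idealb} = \bigcap_{j \in B} \p_j$ equals $\m$, and these ideals witness the disconnectedness of $\Spec(R) \setminus \{\m\}$.

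The main obstacle is essentially bookkeeping: one must be careful with radicals and invoke prime avoidance correctly in both directions. The conceptual core is the observation that a prime containing a sum $\ideala + \idealb$ built from the minimal primes must contain a cross-partition sum $\p_i + \p_j$, which is exactly the bridge converting a combinatorial separation of $\Theta_R$ into a topological decomposition of the punctured spectrum, and vice versa.
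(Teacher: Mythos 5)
Your proof is correct. Note, however, that the paper does not actually supply its own proof of this lemma: it is cited as a result implicit in the proof of Theorem~2.9 of Huneke--Lyubeznik, so there is no in-text argument to compare against. Your write-up is the natural direct argument one would give, and it is sound in both directions: the forward (contrapositive) direction correctly produces a partition of the minimal primes from ideals witnessing disconnectedness of $\Spec^\circ(R)$, with disjointness and nonemptiness of $A$ and $B$ both carefully justified, and the converse correctly builds the witnessing ideals $\ideala = \bigcap_{i \in A}\p_i$ and $\idealb = \bigcap_{j \in B}\p_j$ from a disconnection of $\Theta_R$, using the fact that a prime containing an intersection of finitely many ideals must contain one of them. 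Your handling of the degenerate $\dim(R)=0$ case is a reasonable precaution. One small remark: your argument nowhere uses completeness of $R$ --- only that $R$ is Noetherian and local (so that there are finitely many minimal primes and $\m$ is the unique maximal ideal). The completeness hypothesis in the lemma's statement is inherited from the context in Huneke--Lyubeznik and from the rest of the paper, but is not needed for this particular equivalence.
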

Note that the condition that $\Theta_R$ is connected is sometimes written as the equivalent condition that the reduced homology group $\widetilde{H}_0(\Delta(R),\ZZ)$ vanishes (see \cite{LyuLC,ExtHar}).

%%%%%%%%%%%%%%%

Along with the graph $\Theta_R$, we also use what is now often called the \emph{Hochster-Huneke graph} \cite[Definition 3.4]{HHgraph}.

%%%%%%%%%%%%%%%

\begin{definition}[The Hochster-Huneke graph $\Gamma_R$]
Given a local ring $R$, list all the minimal primes $\p$ of $R$ for which $\dim(R)=\dim(R/\p)$ as $\p_1, \ldots, \p_\ctrTwo$.
The \emph{Hochster}-\emph{Huneke graph of $R$}, denoted $\Gamma_R$, has vertices $1, \dots, \ctrTwo$;
there is an edge between (distinct) vertices $i$ and $j$ precisely if $\p_i+\p_j$ has height one.
\end{definition}

\noindent The Hochster-Huneke graph of a complete equidimensional local ring $R$ is connected if and only if $\Spec(R) \setminus \V(J)$ is a connected topological space for all ideals $J$ of $R$ of height at least two \cite[Theorem 3.6]{HHgraph}.  
In particular, the graph $\Gamma_R$ is disconnected if the punctured spectrum of $R$ is a disconnected topological space.

%%%%%%%%%%%%%%%

 The number of connected components of the Hochster-Huneke graph is closely related to the highest Lyubeznik number, as we now describe.  
We take advantage of this relationship in Section \ref{SecIsoSing: S}.

\begin{remark}\label{Rem HHgraph}
If $R$ is a complete $d$-dimensional local ring containing a field, then the highest Lyubeznik number $\lambda_{d,d}(R)$ counts the number of components of the Hochster-Huneke graph of $\widehat{R}^{sh}$, the completion of the strict Henselization of $R$ \cite[Theorem 1.3]{LyuInvariants},\ \cite[Main Theorem]{W}. 
Suppose that $\Gamma_R$ has $t$ components, and let $J_i$ be the intersection of the minimal primes of $I$ that are vertices of the $i^\text{th}$ component of $\Gamma_R$.
If $S$ is an $n$-dimensional regular local ring containing a field and $R=S/I$, then $H^{n-d}_I(S)\cong H^{n-d}_{J_1}(S) \oplus \cdots \oplus  H^{n-d}_{J_t}(S)$  \cite[Proposition 2.1]{LyuInvariants}.
This means that  $\lambda_{d,d}(R)= \lambda_{d,d}(S/J_1) + \cdots + \lambda_{d,d}(S/J_\ctrOne)$, and since each $\lambda_{d,d}(S/J_i) \geq 1$ \cite[(4.4iii)]{LyuDMod}, we have that $\lambda_{d,d}(R) \geq t$; i.e.,  the number of connected components of $\Gamma_{\widehat{R}^{sh}}$ is 
at least the number of connected components of $\Gamma_R$.

Lyubeznik pointed out that it follows from \cite[Theorem 4.2]{H-L} that if $\k$ is a coefficient field of $R$, then there exists a finite separable extension field $\LL$ of $\k$ for which
$\Gamma_{R\otimes_\k \LL}$ and $\Gamma_{\widehat{R}^{sh}}$ are isomorphic graphs \cite[p.\,630]{LyuInvariants}.
Then  since the strict Henselization of $R \otimes_\k \LL$ is ${R}^{sh}$, $\lambda_{d,d}(R \otimes_\k \LL)=\lambda_{d,d}(R)$ is the number of connected components of  $\Gamma_{R\otimes_\k \LL}$.
\end{remark}

%%%%%%%%%%%%%%%

%%%%%%%%%%%%%%%

A key ingredient in the proof of the new SVT is the behavior of the Hochster-Huneke graph modulo a parameter.
The following result was originally proved by W.\ Zhang \cite[Proposition 2.2]{W}.
It appears as a step toward the equicharacteristic main result of the paper it appears in \cite[Main Theorem]{W}, and several blanket hypotheses are fixed throughout its section of the manuscript.  
Due to this, and because the main result's proof is not presented in a linear manner, 
we include a proof here that follows Zhang's argument to convince the reader that the assumptions below suffice.
In  particular, no hypothesis on characteristic is necessary.

%%%%%%%%%%%%%%%

\begin{proposition}[cf.\,{\cite[Proposition 2.2]{W}}]\label{HHGraphParameter: P}
If $(R, \m)$ is a complete local domain and $x \in \m$ is nonzero, then the Hochster-Huneke graph of $R/xR$ is connected. 
\end{proposition}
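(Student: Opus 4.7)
My plan is to argue by contradiction, using the Mayer-Vietoris sequence together with the Hartshorne-Lichtenbaum vanishing theorem (HLVT) in the complete local domain $R$. Set $n = \dim(R)$; the cases $n \leq 2$ are easily dispatched by inspection (in dimension two, any two distinct minimal primes of $xR$ have sum of height $2$ in $R$, so $\Gamma_{R/xR}$ is the complete graph on its vertex set). For $n \geq 3$, since $R$ is a catenary domain each minimal prime $\p_i$ of $xR$ has height one, hence $\dim(R/\p_i) = n - 1 = \dim(R/xR)$; in particular $R/xR$ is equidimensional and the vertices of $\Gamma_{R/xR}$ are precisely $\p_1,\dots,\p_s$.

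Assuming $\Gamma_{R/xR}$ is disconnected, choose a partition $A \sqcup B$ of $\{1,\ldots,s\}$ into nonempty pieces with no crossing edges, and put $\fa = \bigcap_{i \in A} \p_i$ and $\fb = \bigcap_{j \in B} \p_j$, so $\fa \cap \fb = \sqrt{xR}$. Every minimal prime over $\fa + \fb$ contains some $\p_i + \p_j$ with $i \in A$ and $j \in B$, which by the non-adjacency assumption has height $\geq 3$, so $\Ht(\fa+\fb) \geq 3$. Now apply Mayer-Vietoris in $R$ at top degree to obtain
\[ H^{n-1}_{\fa \cap \fb}(R) \to H^n_{\fa+\fb}(R) \to H^n_\fa(R) \oplus H^n_\fb(R). \]
Since $\sqrt{\fa \cap \fb} = \sqrt{xR}$, we have $H^j_{\fa \cap \fb}(R) = H^j_{xR}(R)$, which vanishes for all $j \geq 2$ because $xR$ is principal. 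The HLVT in the complete local domain $R$ gives $H^n_\fa(R) = H^n_\fb(R) = 0$, as $\fa$ and $\fb$ both have height one and hence positive-dimensional quotients. Combining, $H^n_{\fa+\fb}(R) = 0$, and a second application of HLVT forces $\fa + \fb$ to be non-$\m$-primary.

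For $n = 3$ this is already a contradiction, since $\Ht(\fa+\fb) \geq 3 = n$ would make $\fa+\fb$ an $\m$-primary ideal. The main obstacle is the range $n \geq 4$, where $\Ht(\fa+\fb)$ could lie strictly between $3$ and $n$ and the top-degree Mayer-Vietoris alone is insufficient. To close this case I would induct on $\dim(R)$: use prime avoidance to select $y \in \m$ outside the minimal primes of $\fa + \fb$ and outside each $\p_i$ (so that $y$ is a nonzerodivisor on the relevant quotients), pass to $R/yR$, and derive a contradiction from the inductive hypothesis. A cleaner alternative is to invoke a Grothendieck-style connectedness reduction to get $c(R/xR) \geq c(R) - 1 = n-1$, and then appeal to the Hochster-Huneke characterization \cite[Theorem 3.6]{HHgraph} identifying $\Gamma$-connectedness with $c(R/xR) \geq \dim(R/xR) - 1$. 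The technical delicacy in either route is preserving equidimensionality and the height bound on $\fa + \fb$ under the quotient step used for the induction.
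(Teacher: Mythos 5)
Your Mayer--Vietoris argument cleanly handles $n \leq 3$, but the proposed induction for $n \geq 4$ has a genuine gap: after passing to $R/yR$ you are no longer working with a domain, so the inductive hypothesis---which is stated for complete local domains---does not apply. Making this route work would require formulating and proving a stronger statement that is stable under cutting by a parameter, and it is not obvious what that statement should be, nor that it would be any easier than the original proposition.

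Your ``cleaner alternative,'' however, is correct, subject to a small arithmetic slip. The connectedness bound for complete local rings (see \cite[Ch.~19]{BroSharp}) gives
\[
c(R/xR) \;\geq\; \min\{c(R),\,\dim R - 1\} - \operatorname{ara}(xR) \;=\; (n-1) - 1 \;=\; n-2 \;=\; \dim(R/xR) - 1,
\]
not $c(R) - 1 = n-1$ as you wrote; but since $\dim(R/xR) - 1$ is precisely the bound that \cite[Theorem 3.6]{HHgraph} requires, and you verified that $R/xR$ is equidimensional, the conclusion still follows. This is a genuinely different argument from the paper's, which uses no local cohomology whatsoever: the paper replaces $R/xR$ by $A = R/\sqrt{xR}$, passes to the integral closure $S$ of $R$, exploits that $S$ satisfies $S_2$ so that the $S_2$-ification of $S/\sqrt{xS}$ is local and hence $\Gamma_{S/\sqrt{xS}}$ is connected, and then transports a path of adjacencies from $\Gamma_{S/\sqrt{xS}}$ down to $\Gamma_A$ via going-up and an explicit chain-of-primes height estimate. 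Your route trades self-containment for brevity by invoking the connectedness bound as a black box; the paper's is longer but entirely elementary. It is worth noting that the standard proof of the connectedness bound itself reduces to the analytically irreducible case via normalization, so the two approaches are not as far apart at bottom as they first appear.
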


%%%%%%%%%%%%%%%

\begin{proof}
Let $A = R /\sqrt{xR}$.  Since the ideals $xR$ and $\sqrt{x R}$ of $R$ have the same minimal primes, the Hochster-Huneke graphs of $R/xR$ and $A$ are isomorphic, so 
it suffices to show that $\Gamma_A$ is connected.

Let $S$ be the integral closure of $R$ in its field of fractions.
Then $S$ is a complete local domain, and is a finitely-generated $R$-module \cite[Theorem 4.3.4]{SwHu}.
Let $B=S/\sqrt{xS}$.  Since $S$ satisfies Serre's criterion $S_2$, the $S_2$-ification of $S/\sqrt{xS}$ is local \cite[Proposition 3.9]{HHgraph}, so that 
 $\Gamma_{B}$ is connected \cite[Theorem 3.6]{HHgraph}.

We have that $\sqrt{xS}\cap R=\sqrt{xR}$ by the going up theorem, so $A$ injects into $B$, and $B$ is a finitely-generated $A$-module, so that $A$ and $B$ have the same dimension.  
Fix minimal primes $\p$ and $\p'$ of $A$.  Again by the going up theorem, there exist minimal primes $\q$ and $\q'$ of $B$ for which $\q \cap A=\p$ and $\q' \cap A =\p'$. 

Since $\Gamma_B$ is connected, there exists a finite sequence of minimal primes  
\[\q=\q_1, \q_2,\ldots,\q_\ctrOne =\q'\]
 of $B$ for which $\Ht_B( \q_i+\q_{i+1}) = 1$ for $1 \leq i < \ctrOne$. 
For each $1 \leq i \leq n$, let $\p_i=\q_i \cap A$.
We aim to show that for all $1 \leq i < \ctrOne$, $\Ht_A(\p_i+\p_{i+1}) \leq 1$ (i.e., either $\p_i = \p_{i+1}$ or $\Ht_A(\p_i+\p_{i+1}) = 1$), so that $\Gamma_A$ is connected, completing the proof.

For some $1 \leq i < \ctrOne$, let $\r$ be a prime ideal of $B$ of height one containing $\q_i+\q_{i+1}$. 
Let $d = \dim(R)$.
Then $\dim(B) = d -1$, and since $B$ is equidimensional and catenary, there exists a chain of $d-2$ primes ideal of $A$ containing $\r$,
\[
\r \subsetneq \r_2 \subsetneq \ldots \subsetneq \r_{d-1} \subseteq  B.
\]
Then by the lying over theorem, we also have the strict chain
\[
\r\cap A \subsetneq \r_2 \cap A \subsetneq \ldots \subsetneq \r_{d-1}\cap A \subseteq A.
\] 
Since $\dim(A)=d-1$, the $\Ht_A(\r \cap A) \leq 1$.
Therefore, the $\Ht_A(\p_i+\p_{i+1}) \leq 1$, and the result follows.
\end{proof}

%%%%%%%%%%%%%%%
%%%%%%%%%%%%%%%

\subsection{Second vanishing theorem over unramified  regular local rings of mixed characteristic}
In proving the new SVT, we have the following strategy:  
Like Huneke and Lyubeznik's proof of the SVT in equal characteristic \cite[Theorem 2.9]{H-L}, we first study local cohomology with support in a prime ideal. 
For arbitrary ideals, we then use an inductive argument on the number of minimal primes of the ideal.
The heart of our proof lies in the technique we apply to the case of prime ideals, where are are able to avoid machinery like that in \emph{loc.\,cit.} by reducing to the equal-characteristic case addressed there.  
Toward this, consider the following lemma.

%%%%%%%%%%%%%%%

\begin{lemma} \label{cohomologicalDimensionReduction: L}  
Let $S$ be a complete unramified  regular local ring of mixed characteristic $p>0$, and let $A=S/pS$. 
If $I$ is an ideal of $S$ for which $p \in I$, then \[\cd(S, I)\leq \cd(A, IA)+1.\]
\end{lemma}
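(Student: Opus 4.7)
The plan is to use the regularity of $p$ together with the long exact sequence of local cohomology. Since $S$ is an unramified regular local ring of mixed characteristic $p$, the element $p$ is a nonzerodivisor, so
\[ 0 \to S \xrightarrow{\cdot p} S \to A \to 0 \]
is a short exact sequence. Applying $H^\bullet_I(-)$ produces a long exact sequence
\[ \cdots \to H^{j-1}_I(A) \to H^j_I(S) \xrightarrow{\cdot p} H^j_I(S) \to H^j_I(A) \to H^{j+1}_I(S) \to \cdots \]
whose outer terms I want to force to vanish in high degrees.

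The first step is to reduce the local cohomology of $A$ as an $S$-module to its local cohomology as an $A$-module. Since $pA = 0$ and $p \in I$, I can compute $H^j_I(A)$ via the \v{C}ech complex on a generating set of $I$ that includes $p$; the localization $A_p = 0$ makes this complex coincide with the one computing $H^j_{IA}(A)$. Hence $H^j_I(A) \cong H^j_{IA}(A)$, and in particular $H^j_I(A) = 0$ whenever $j > c := \cd(A, IA)$. Consequently, in the long exact sequence above, for every $j \geq c+2$ both $H^{j-1}_I(A)$ and $H^j_I(A)$ vanish, so multiplication by $p$ is a bijection on $H^j_I(S)$.

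The final step is the torsion argument. Every element of $H^j_I(S)$ is annihilated by some power of $I$, and since $p \in I$, it is annihilated by some power of $p$. If multiplication by $p$ is injective on $H^j_I(S)$, no nonzero element can be killed by any power of $p$, so $H^j_I(S) = 0$ for all $j \geq c+2$. This yields $\cd(S, I) \leq c + 1 = \cd(A, IA) + 1$, as required.

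I expect the main technical point to be the identification $H^j_I(A) \cong H^j_{IA}(A)$, which is straightforward via the \v{C}ech computation once one notes that $p$ annihilates $A$; the rest of the argument is a clean application of the long exact sequence combined with $I$-torsion.
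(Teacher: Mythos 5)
Your proof is correct and takes essentially the same route as the paper: the same long exact sequence arising from $0 \to S \xrightarrow{p} S \to A \to 0$, the same observation that the connecting terms $H^{j}_{IA}(A)$ vanish in high degree so that multiplication by $p$ becomes injective on $H^j_I(S)$, and the same $p$-torsion argument (that an $I$-power-torsion module on which $p \in I$ acts injectively must vanish). The identification $H^j_I(A) \cong H^j_{IA}(A)$ that you highlight as the main technical point is the standard independence-of-base theorem for local cohomology, which the paper uses implicitly by writing $H^{i-1}_{IA}(A)$ directly in the long exact sequence.
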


%%%%%%%%%%%%%%%

\begin{proof}
Fix $i > \cd(A, IA) + 1$; we aim to show that $H^i_I(S) = 0$.
Indeed, $H^{i-1}_{IA}(A) = 0$ for such a choice of $i$, so that the long exact sequence in local cohomology 
\[
\cdots \to H^{i-1}_{IA}(A) \to H^i_I(S)\FDer{p}H^i_I(S)\to \cdots
\]
associated to the short exact sequence $0\to S\FDer{p} S\to A\to 0$
then implies that multiplication by $p$ is injective on $H^i_I(S)$.

However, this is impossible unless $H^i_I(S)$ itself vanishes:  If not, fix the smallest positive integer $N$ for which a nonzero element $u \in H^i_I(S)$ is killed by $p^N$, which exists since $p \in I$.  
Then $p^{N-1} u$ is a nonzero element in the kernel of the map given by multiplication by $p$ on $H^i_I(S)$, a contradiction. 
\end{proof}

%%%%%%%%%%%%%%%

We can now address the case of local cohomology with support in a prime ideal.  

%%%%%%%%%%%%%%%

\begin{lemma} \label{SVTPrime: L}
Let $S$ be an $n$-dimensional complete unramified  regular local ring of mixed characteristic, with a separably closed residue field.
If $\q$ is a prime ideal of $S$ for which $\dim (S/\q)\geq 3$, then $H^{n-1}_\q(S)=0$.
\end{lemma}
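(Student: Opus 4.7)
The plan is to reduce the mixed-characteristic statement to the equal-characteristic second vanishing theorem of Huneke--Lyubeznik applied to $A = S/pS$, which is an $(n-1)$-dimensional complete regular local ring containing $\FF_p$, with the same separably closed residue field as $S$. The bridge is Lemma \ref{cohomologicalDimensionReduction: L}: for any ideal $I$ of $S$ containing $p$, $\cd(S,I) \leq \cd(A, IA) + 1$. After disposing of the trivial case $\q = 0$ (in which the module in question vanishes for indexing reasons), I will split into the cases $p \in \q$ and $p \notin \q$, in each producing an ideal $I \supseteq pS$ for which the equal-characteristic SVT forces $\cd(A, IA) \leq n-3$, and hence $H^{n-1}_I(S) = 0$.

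When $p \in \q$, take $I = \q$. Then $\q A$ is prime in $A$, and $A/\q A = S/\q$ is a complete local domain of dimension $\geq 3$, whose punctured spectrum is irreducible (hence connected) because the zero ideal is a dense generic point. The equal-characteristic SVT then yields $H^{(n-1)-1}_{\q A}(A) = 0$, and Lemma \ref{cohomologicalDimensionReduction: L} gives $H^{n-1}_\q(S) = 0$. When $p \notin \q$ and $\q \neq 0$, set $\q' = \q + pS$ and apply the long exact sequence from Section \ref{Loc Coh} with $f = p$:
\[
H^{n-1}_{\q'}(S) \to H^{n-1}_\q(S) \to H^{n-1}_\q(S_p).
\]
The third term vanishes because $S_p$ is a DVR and $n-1 \geq 3$ (one has $n \geq 4$ since $\dim(S/\q) \geq 3$ and $\Ht(\q) \geq 1$). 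To kill the first term, apply Lemma \ref{cohomologicalDimensionReduction: L} to $\q'$, reducing the problem to showing that $S/\q'$ has dimension $\geq 2$ and connected punctured spectrum. The dimension bound is immediate: since $S/\q$ is a complete local catenary domain containing $p$ as a nonzero non-unit, $\dim(S/\q') = \dim(S/\q) - 1 \geq 2$.

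The main obstacle is the connectedness of the punctured spectrum of $S/\q' = (S/\q)/(p)$, and this is where I would invoke Proposition \ref{HHGraphParameter: P}. Applied to the complete local domain $R = S/\q$ and the nonzero element $p$, it shows that the Hochster--Huneke graph $\Gamma_{R/(p)}$ is connected. Since $R/(p)$ is equidimensional---all minimal primes of $(p)$ in the catenary domain $R$ have height one---and its maximal ideal has height $\dim(R) - 1 \geq 2$, \cite[Theorem 3.6]{HHgraph} then forces the punctured spectrum of $R/(p)$ to be connected, closing the argument.
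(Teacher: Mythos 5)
Your proposal follows essentially the same strategy as the paper: reduce to the equal-characteristic SVT of Huneke--Lyubeznik via Lemma~\ref{cohomologicalDimensionReduction: L}, split into the cases $p\in\q$ and $p\notin\q$, and in the latter case use Proposition~\ref{HHGraphParameter: P} to get connectedness of the punctured spectrum of $S/(\q+pS)$. Your route to that connectedness is, if anything, slightly cleaner than the paper's: you invoke \cite[Theorem 3.6]{HHgraph} directly on the equidimensional complete ring $R/(p)$, while the paper passes through the auxiliary graph $\Theta$ and Lemma~\ref{ThetaConnected: L}.

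However, there is a genuine error in the step showing $H^{n-1}_\q(S_p)=0$. You assert that ``$S_p$ is a DVR,'' but in the long exact sequence
\[
\cdots \to H^{n-1}_{\q+pS}(S)\to H^{n-1}_\q(S)\to H^{n-1}_\q(S_p)\to \cdots
\]
the symbol $S_p$ denotes the localization at the multiplicative set $\{1,p,p^2,\ldots\}$, i.e.\ $S[1/p]$, which is a regular ring of dimension $n-1\geq 3$ --- certainly not a DVR. (You appear to have conflated it with $S_{(p)}$, the localization at the height-one prime $pS$; that ring is a DVR, but it plays no role here.) The stated justification therefore fails, and some argument is still required. The paper handles it by localizing at each prime $\p$ of $S$ not containing $p$: one has $\(H^{n-1}_\q(S_p)\)_\p \cong H^{n-1}_{\q S_\p}(S_\p)$, which vanishes for dimension reasons when $\Ht(\p)<n-1$, and by HLVT when $\Ht(\p)=n-1$, since $\Ht(\q)\leq n-3$ forces $\p\neq\q$ and hence $\dim(S_\p/\q S_\p)>0$.

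A smaller point in the $p\in\q$ case: to extract $H^{n-1}_\q(S)=0$ from Lemma~\ref{cohomologicalDimensionReduction: L} you need $\cd(A,\q A)\leq n-3$, which requires both $H^{n-2}_{\q A}(A)=0$ (the SVT, as you say) \emph{and} $H^{n-1}_{\q A}(A)=0$; the latter is the HLVT on the $(n-1)$-dimensional ring $A$ using $\dim(A/\q A)\geq 3>0$. This is trivial to add but should be stated.
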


\begin{proof}
Suppose that $S$ has mixed characteristic $p>0$, and assume first that $p\in \q$.  
If $A = S/pS$, then $S/\q \cong A/\q A$ has equal characteristic $p$.
Since $\dim\( A/\q A \) = \dim\( S/\q \) \geq 3$, $\q A$ is not primary to the maximal ideal of $A$.
Hence, since $\dim(A) = n-1$,  $H^{n-1}_{\q A}(A)$ vanishes by the HLVT \cite[Theorem 3.1]{HartshorneCD}.
Moreover, $H^{n-2}_{\q A}(A) = 0$ by the SVT in equal characteristic \cite[Theorem 2.9]{H-L}.
Thus, $\cd(A, \q A) \leq n-3$, so that by Lemma \ref{cohomologicalDimensionReduction: L}, $\cd(S, \q) \leq n-2$, finishing the proof in this case.

Now suppose, alternatively, that $p\not\in \q$.  
Let $J=\q+pS$, so that $S/J$ has equal characteristic $p$ and dimension at least two.
For distinct minimal primes $\p_1$ and $\p_2$ of $S/J$, if $\Ht_{S/J}(\p_1 + \p_2) = 1$, then 
$\p_1+\p_2$ cannot be primary to the maximal ideal of $S/J$ since $\dim(S/J) \geq 2$.  
Thus, $\Gamma_{S/J}$ is a subgraph of $\Theta_{S/J}$.
In fact, it is a spanning subgraph:  
since $S/\q$ is catenary, $S/J$ is equidimensional by Krull's principal ideal theorem; therefore, the vertices of $\Gamma_{S/J}$ are indexed by all minimal primes of $S/J$, like that of $\Theta_{S/J}$. 

Since $\q$ is prime and $S$ is complete, $S/\q$ is a complete local domain.
Therefore, taking $x = p$ in Proposition \ref{HHGraphParameter: P}, we know that the Hochster-Huneke graph $\Gamma_{S/J}$ is connected.
Since $\Gamma_{S/J}$ is a spanning subgraph of $\Theta_{S/J}$ (i.e., the two graphs have the same vertices), this subgraph must also be connected.  
Therefore, the punctured spectrum of $S/J$ is connected by Lemma \ref{ThetaConnected: L}.
Then by the SVT in equal characteristic, $\cd(A, JA) \leq n-3$ \cite[Theorem 2.9]{H-L}, so that $\cd(S, J)\leq n-2$ by Lemma \ref{cohomologicalDimensionReduction: L}, and, in particular, $H^{n-1}_J(S)=0$.
With this in mind, in order to show that $H^{n-1}_\q(S)$ vanishes, by
 the long exact sequence
\[
\ldots \to H^{n-1}_J(S)\to H^{n-1}_\q(S)\to H^{n-1}_\q(S_p)\to 0,
\]
it suffices to show that $H^{n-1}_\q(S_p)$ does.

Toward this, note that the dimension of $S_p$ is $n-1$ since $p$ is in the maximal ideal of $S$, but not in every prime ideal of height $n-1$ of $S$.  
By similar reasoning, $\dim\(S_p/\q S_p\) \geq 2$.
Moreover, for every prime ideal $\p$ of $S$ not containing $p$, $S_\p$ is also a complete regular local ring, and $\( S_p \)_\p \cong S_\p$, so that $\( H^{n-1}_\q(S_p) \)_\p \cong H^{n-1}_{\q S_\p} (S_\p) = 0$ by the HLVT.  Thus, $H^{n-1}_\q(S_p) = 0$, completing the proof.
\end{proof}

%%%%%%%%%%%%%%%

We now are prepared to prove an SVT over unramified  regular local rings of mixed characteristic.  
Our statement is more general than Theorem \ref{SVTUnramified: T} from the introduction.

%%%%%%%%%%%%%%%

\begin{theorem}[SVT over unramified  regular rings of mixed characteristic; cf.\ Theorem \ref{SVTUnramified: T}] \label{SVTUnramifiedGeneral: T}
Let $S$ be an $n$-dimensional complete unramified  regular ring of mixed characteristic, whose residue field is separably closed. 
Let $I$ be an ideal of $S$ for which $\dim(S/\p) \geq 3$ for every minimal prime $\p$ of $I$.
Then $H^{n-1}_I(S)=0$ if and only if the punctured spectrum of $S/I$ is connected.
\end{theorem}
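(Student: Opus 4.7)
The plan is to combine Mayer-Vietoris, the Hartshorne-Lichtenbaum vanishing theorem (HLVT), and the prime case handled in Lemma~\ref{SVTPrime: L}, passing between topology and algebra through Lemma~\ref{ThetaConnected: L}.

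For the \emph{only if} direction, argue by contrapositive: suppose the punctured spectrum of $S/I$ is disconnected. Using the algebraic description in Section~\ref{Connectedness: SS}, one shows that the minimal primes $\p_1,\ldots,\p_t$ of $I$ split into two nonempty disjoint subsets $A,B$ such that $\ideala:=\bigcap_{i\in A}\p_i$ and $\idealb:=\bigcap_{j\in B}\p_j$ satisfy $\sqrt{\ideala\cap\idealb}=\sqrt{I}$ and $\sqrt{\ideala+\idealb}=\m$. Every minimal prime of $\ideala$ or $\idealb$ is one of the $\p_i$, so the hypothesis gives $\dim(S/\ideala),\dim(S/\idealb)\geq 3$, and HLVT yields $H^n_\ideala(S)=H^n_\idealb(S)=0$. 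The relevant segment of Mayer-Vietoris reads
\[
H^{n-1}_I(S)\to H^n_\m(S)\to H^n_\ideala(S)\oplus H^n_\idealb(S),
\]
so the leftmost map is surjective; since $H^n_\m(S)\neq 0$, this forces $H^{n-1}_I(S)\neq 0$.

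For the \emph{if} direction, assume the punctured spectrum of $S/I$ is connected, so $\Theta_{S/I}$ is connected by Lemma~\ref{ThetaConnected: L}. Fix a spanning tree of $\Theta_{S/I}$ and label the minimal primes of $I$ as $\p_1,\ldots,\p_t$ by a breadth-first traversal from a root, so that for each $j\geq 2$ some $i<j$ satisfies ``$\p_i+\p_j$ is not $\m$-primary''. Set $J_j:=\p_1\cap\cdots\cap\p_j$ and induct on $j$ to show $H^{n-1}_{J_j}(S)=0$. The base $j=1$ is exactly Lemma~\ref{SVTPrime: L}. For the step, apply Mayer-Vietoris to $J_{j-1}$ and $\p_j$:
\[
H^{n-1}_{J_{j-1}}(S)\oplus H^{n-1}_{\p_j}(S)\to H^{n-1}_{J_j}(S)\to H^n_{J_{j-1}+\p_j}(S).
\]
The leftmost term vanishes by the inductive hypothesis and Lemma~\ref{SVTPrime: L} applied to $\p_j$; for the rightmost term, pick $i<j$ with $\p_i+\p_j$ not $\m$-primary, note that $J_{j-1}\subseteq \p_i$ implies $J_{j-1}+\p_j\subseteq \p_i+\p_j$, so $\dim(S/(J_{j-1}+\p_j))\geq \dim(S/(\p_i+\p_j))\geq 1$, and HLVT kills it. Hence $H^{n-1}_{J_j}(S)=0$; taking $j=t$ and using $\sqrt{J_t}=\sqrt{I}$ gives $H^{n-1}_I(S)=0$.

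The main obstacle is really already resolved inside Lemma~\ref{SVTPrime: L}, which reduces the mixed-characteristic case to the equicharacteristic SVT via Proposition~\ref{HHGraphParameter: P}. Beyond that, what remains is a controlled interplay among Mayer-Vietoris, HLVT, and the combinatorics of $\Theta_{S/I}$; the spanning-tree ordering is precisely what guarantees that, at each inductive step, both ends of the chosen Mayer-Vietoris segment vanish.
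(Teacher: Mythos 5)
Your proof is correct and follows essentially the same route as the paper: both directions rest on the same triad of Lemma~\ref{SVTPrime: L}, Lemma~\ref{ThetaConnected: L}, the HLVT, and Mayer--Vietoris, with the sufficiency direction proved by inducting along a connectivity-preserving ordering of the minimal primes of $I$. The only differences are cosmetic — you phrase the necessity direction as a contrapositive (and get a slightly shorter exact-sequence argument for free), and you make the ordering explicit via a spanning tree of $\Theta_{S/I}$, using the inclusion $J_{j-1}+\p_j\subseteq\p_i+\p_j$ to see directly that $\dim(S/(J_{j-1}+\p_j))\geq 1$, where the paper instead deduces $\sqrt{J_{t-1}+\q_t}\subsetneq\n$ from the assumed connectedness.
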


%%%%%%%%%%%%%%%

\begin{proof}
First assume that $H^{n-1}_I(S)=0$, and by way of contradiction, assume that the punctured spectrum of $S/I$ is disconnected.
Let $\n$ denote the maximal ideal of $S$, so that there exist ideals $J_1$ and $J_2$ of $S$ that are not $\n$-primary
for which $\sqrt{J_1\cap J_2}=\sqrt{I}$ and $\sqrt{J_1+J_2}=\n$. 
Consider the Mayer-Vietoris sequence associated to these ideals,
\[
\cdots \to H^{n-1}_I(S)\to H^n_{J_1+J_2}(S)\to H^n_{J_1}(S)\oplus H^n_{J_2}(S)\to H^n_{I}(S)\to 0.
\]
By our choice of $I$,  $H^{n-1}_I(S)=0$, and $H^n_{J_1}(S)=H^n_{J_2}(S)=H^n_{I}(S)=0$
by the HLVT  \cite[Theorem 3.1]{HartshorneCD}.
Then $H^n_\n(S)=H^n_{J_1+J_2}(S)=0$, contradicting the HLVT.

For the other direction, we proceed by induction on the number of minimal primes of $I$.
For ideals with only one minimal prime, the result follows from Lemma \ref{SVTPrime: L}.
Fix an integer $t >1$.
Assume that for all ideals $J$ of $S$ with $\ctrOne-1$ minimal primes, for which $\dim(S/\p) \geq 3$ for each minimal prime $\p$ of $J$,
if the punctured spectrum of $S/J$ is connected, then $H^{n-1}_J(S) = 0$.

Fix an ideal $I$ with $\ctrOne$ minimal primes, such that $\dim(S/\p) \geq 3$ for each minimal prime $\p$ of $I$, and for which the punctured spectrum of $R:=S/I$ is connected.
Then $\Theta_R$ is also connected by Lemma \ref{ThetaConnected: L}. 
Thus, there is an ordering $\q_1,\ldots, \q_\ctrOne$ of the minimal primes of $I$ such that the induced subgraph of $\Theta_R$ on indices $\q_1, \ldots, \q_i$ (i.e., the graph on these vertices, with all edges between them in $\Theta_R$) is connected for all $1 \leq i \leq \ctrOne$.   
This means that given $1 \leq i \leq \ctrOne$, if  $J_i=\q_1\cap \ldots \cap \q_i$, then the graph $\Theta_{S/J_i}$ is connected.
Again calling upon Lemma \ref{ThetaConnected: L}, we deduce that the punctured spectrum of each $S/J_i$ is connected.
Consider the following piece of the Mayer-Vietoris sequence in local cohomology associated to $J_{\ctrOne-1}$ and $\q_\ctrOne$:
\[
\ldots \to H^{n-1}_{J_{\ctrOne-1}}(S)\oplus H^{n-1}_{\q_\ctrOne}(S)\to H^{n-1}_{I}(S)\to H^{n}_{J_{\ctrOne-1}+\q_\ctrOne}(S)\to 0.
\]
By the inductive hypothesis, $H^{n-1}_{J_{\ctrOne-1}}(S)=0$. 
In addition, $H^{n-1}_{\q_\ctrOne}(S)=0$ by Lemma \ref{SVTPrime: L}.
Moreover, since the punctured spectrum of $R$ is connected, $\sqrt{J_{\ctrOne-1}+\q_\ctrOne} \subsetneq \n$, so that $H^n_{J_{\ctrOne-1}+\q_\ctrOne}(S)=0$ by the HLVT \cite[Theorem 3.1]{HartshorneCD}. 
Hence, $H^{n-1}_I(S)=0.$
\end{proof}

%%%%%%%%%%%%%%%
%%%%%%%%%%%%%%%

\subsection{Consequences of the new SVT}

As a by-product of the new SVT, we can establish a mixed characteristic version of a result first obtained by Huneke and Lyubeznik in equal characteristic \cite[Theorem 5.2]{H-L}. 
The proof follows the same general argument as in \emph{loc.\,cit.}, but we include the details for completeness. 

Before proving this theorem, we state a result of Faltings \cite[Korollar 2]{Fa1}. 
This theorem is stated only for rings that contain a field; however, as he mentions in the introduction of \emph{loc.\,cit.}, the result also holds in the mixed characteristic setting.
We recall that the \emph{big height} of an ideal is the maximum of the heights of its minimal primes.
We use the notation $\down{ x }$ to denote the greatest integer not exceeding a real number $x$. 

%%%%%%%%%%%%%%%

\begin{theorem}[Faltings]  \label{Faltings: T} 
Let $S$ be an $n$-dimensional complete regular ring whose residue field is separably closed.  
If $I$ is an ideal of $S$ with big height $b \geq 1$, then \[\cd(S, I) \leq n- \down{ n/b }.\]   

\end{theorem}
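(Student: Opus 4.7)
The plan is to prove the bound by induction on $n = \dim S$, reducing first to the case of a prime ideal via Mayer-Vietoris, and then passing through Lemma~\ref{cohomologicalDimensionReduction: L} to translate from the mixed-characteristic regime into the equal-characteristic one, where the classical Faltings result is assumed available.

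For the reduction to primes, write $I = \p_1 \cap \cdots \cap \p_s$ as the intersection of its minimal primes (each of height at most $b$), and induct on $s$. With $J = \p_1 \cap \cdots \cap \p_{s-1}$, the Mayer-Vietoris sequence
\[
\cdots \to H^{i-1}_{J + \p_s}(S) \to H^i_I(S) \to H^i_J(S) \oplus H^i_{\p_s}(S) \to \cdots
\]
reduces the problem to bounding $\cd(S, J)$, $\cd(S, \p_s)$, and $\cd(S, J+\p_s)$. The first two terms are handled by the inductive hypothesis. The cross-term ideal $J + \p_s$ has big height at most $2b$; if it is $\n$-primary, the HLVT applies directly, while otherwise one can invoke the new SVT (Theorem~\ref{SVTUnramifiedGeneral: T}) and the connectedness of the associated punctured spectrum, which is ensured because the residue field is separably closed.

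For the base case $I = \q$ prime of height $b$, I would induct on $n$ by considering whether the residual characteristic $p$ lies in $\q$. If $p \in \q$, set $A = S/pS$, a complete equal-characteristic regular local ring of dimension $n-1$ in which $\q/pS$ has height $b-1$. Applying Faltings' theorem in equal characteristic to $A$ gives
\[
\cd(A, \q/pS) \leq (n-1) - \down{(n-1)/(b-1)},
\]
and Lemma~\ref{cohomologicalDimensionReduction: L} lifts this to $\cd(S, \q) \leq n - \down{(n-1)/(b-1)} \leq n - \down{n/b}$, where the last inequality follows from the elementary fact that $(n-1)/(b-1) \geq n/b$ whenever $n \geq b \geq 2$. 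If $p \notin \q$, I would combine the long exact sequence
\[
\cdots \to H^i_{\q + pS}(S) \to H^i_\q(S) \to H^i_\q(S_p) \to \cdots
\]
with inductive bounds: the ideal $\q + pS$ contains $p$, so the previous case applies (its big height being at most $b+1$); and $S_p$ is a regular local ring of dimension $n-1$ and equal characteristic, so the classical Faltings bound directly controls the terms $H^\bullet_\q(S_p)$ after reducing to a suitable regular local ring via the localization and completion formalism used in the proof of Lemma~\ref{SVTPrime: L}.

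The main obstacle will be the arithmetic bookkeeping around the floor function throughout the inductive steps, and in particular verifying that the Mayer-Vietoris cross terms $J + \p_s$ do not erode the bound; the big height may double from $b$ to $2b$, so induction must be carried out on a combined invariant (say the lexicographic pair of $s$ and the big height) rather than on either one alone. A secondary subtlety is the boundary behavior as $b$ approaches $1$, where the bound becomes degenerate and one must separately confirm that the HLVT and the trivial cases of $\m$-primary ideals are consistent with the inductive framework. The input of the separably closed residue field is essential throughout, as it is precisely what links connectedness of spectra to vanishing of local cohomology via Theorem~\ref{SVTUnramifiedGeneral: T}.
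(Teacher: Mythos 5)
The paper does not actually prove Theorem~\ref{Faltings: T}: it is a cited result. The sentence immediately before the statement explains that this is \cite[Korollar 2]{Fa1}, and that although Faltings stated it there for rings containing a field, Faltings himself remarks in the introduction of \emph{loc.\,cit.}\ that the result also holds in mixed characteristic. In other words, the paper's ``proof'' is a citation, and your attempt to deduce the theorem from the equicharacteristic case is a genuinely different (and considerably more ambitious) undertaking than anything the paper does.

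That said, as an independent argument your reduction does not close. The central problem is the Mayer--Vietoris step: to get $H^i_I(S) = 0$ for $i > n - \lfloor n/b\rfloor$ you need $H^{i+1}_{J + \p_s}(S) = 0$ in that range, which amounts to $\cd(S, J + \p_s) \leq n - \lfloor n/b\rfloor$. But $J + \p_s$ has minimal primes of height up to roughly $2b$, so the Faltings bound applied to $J + \p_s$ is \emph{weaker}, not stronger, than what you need. Raising the big height always worsens the estimate, so no lexicographic or combined induction can restore the inequality. Nor do HLVT or the SVT fill the gap: HLVT only controls the top index $n$, and the SVT only lowers the vanishing threshold to $n - 2$, which is far above $n - \lfloor n/b\rfloor$ when $b$ is small. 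Faltings' original proof is genuinely different and does not reduce to primes via Mayer--Vietoris at all. There are further technical snags: in the prime case with $b = 1$ and $p \in \q$ the ideal $\q A$ has height $0$, so the inner Faltings estimate in $A = S/pS$ is vacuous; the case $p \notin \q$ leaves you localizing at primes where $S_\p$ is no longer complete and the residue field need not be separably closed, so the classical Faltings theorem does not apply directly; and the passage through Lemma~\ref{cohomologicalDimensionReduction: L} presumes $S$ is unramified of mixed characteristic, whereas the theorem is stated for arbitrary complete regular local rings.
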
 

%%%%%%%%%%%%%%%

\noindent The following is an adaption of a result of Lyubeznik to the mixed characteristic setting, which we apply in the subsequent theorem.

%%%%%%%%%%%%%%%

\begin{lemma}[cf.\ {\cite[Lemma]{LySomeAlgebraicSets}}] \label{Lyubeznik 85: L}
Let $S$ be an $n$-dimensional unramified complete regular ring of unramified mixed characteristic, with a separably closed residue field.
Given ideals $I_1,\ldots,I_t$ of $S$, each of big height $b \geq 1$,
 \[\cd(S , I_1 + \cdots + I_t)<n-\down{n/b}+t.\]   
\end{lemma}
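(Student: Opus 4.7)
The plan is to prove the lemma by induction on $t$, adapting Lyubeznik's original equicharacteristic argument using only Mayer-Vietoris and Faltings' theorem (Theorem \ref{Faltings: T}), both of which are available in mixed characteristic. Since Faltings' bound $n - \down{n/b'} \leq n - \down{n/b}$ for $b' \leq b$, it is convenient to prove the slightly stronger statement that the conclusion holds for ideals of big height \emph{at most} $b$; this is needed because intersections of such ideals may have smaller big height. The base case $t=1$ is then immediate from Theorem \ref{Faltings: T}.

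For the inductive step, assume the result for $t-1$ and set $J = I_1 + \cdots + I_{t-1}$, so that $I_1 + \cdots + I_t = J + I_t$. Apply the Mayer-Vietoris sequence from Section \ref{Loc Coh} to $J$ and $I_t$:
\[
\cdots \to H^{j-1}_{J \cap I_t}(S) \to H^j_{J + I_t}(S) \to H^j_J(S) \oplus H^j_{I_t}(S) \to H^j_{J \cap I_t}(S) \to \cdots.
\]
Exactness of this sequence yields the standard estimate
\[
\cd(S, J + I_t) \leq \max\bigl\{ \cd(S, J),\ \cd(S, I_t),\ \cd(S, J \cap I_t) + 1 \bigr\}.
\]

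The crucial step is to bound $\cd(S, J \cap I_t)$, which the plan is to do via a radical identity. Using distributivity of union over intersection on the level of vanishing loci,
\[
\V\bigl( (I_1 \cap I_t) + \cdots + (I_{t-1} \cap I_t) \bigr) = \bigcap_{i=1}^{t-1}\bigl( \V(I_i) \cup \V(I_t)\bigr) = \V(I_1 + \cdots + I_{t-1}) \cup \V(I_t) = \V(J \cap I_t),
\]
so the two ideals have the same radical and therefore the same local cohomology. Each $I_i \cap I_t$ has big height at most $b$ (its minimal primes are drawn from $\operatorname{Min}(I_i) \cup \operatorname{Min}(I_t)$), so the inductive hypothesis applied to the $t-1$ ideals $I_1 \cap I_t, \ldots, I_{t-1} \cap I_t$ yields $\cd(S, J \cap I_t) < n - \down{n/b} + (t-1)$, i.e.\ at most $n - \down{n/b} + t - 2$.

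Combining this with the inductive hypothesis $\cd(S, J) \leq n - \down{n/b} + t - 2$ and Faltings' bound $\cd(S, I_t) \leq n - \down{n/b}$ in the Mayer-Vietoris estimate gives $\cd(S, J + I_t) \leq n - \down{n/b} + t - 1 < n - \down{n/b} + t$, closing the induction. The main obstacle is recognizing the radical identity above, which is exactly what allows the inductive hypothesis to be applied to $\cd(S, J \cap I_t)$; without it, one would be forced to bound the cohomological dimension of an intersection in terms of the summands, for which no clean inequality is available.
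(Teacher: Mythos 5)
Your proof is correct and follows the same strategy as the paper: induction on $t$ via the Mayer--Vietoris sequence applied to $I' = I_1 + \cdots + I_{t-1}$ and $I_t$, together with the radical identity $\sqrt{\textstyle\sum_i (I_i \cap I_t)} = \sqrt{I' \cap I_t}$ that makes the inductive hypothesis applicable to the intersection term. Your explicit strengthening to ideals of big height \emph{at most} $b$ is a warranted refinement, since the ideals $I_i \cap I_t$ need not all have big height exactly $b$ (a point the paper's proof passes over silently when it invokes the inductive hypothesis on $I''$).
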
 

\begin{proof}
We proceed by induction on $t$.  
If $t = 1$, the statement follows by Theorem \ref{Faltings: T} \cite[Korollar 2]{Fa1}.  
Fix $t > 1$, and assume that for any $1 \leq s < t$, given ideals $J_1, \ldots,  J_s$ of $S$ with big height $b\geq 1$, we have that $\cd(S,  J_1 + \cdots + J_s) < n - \down{n/b}+s$.

Fix ideals  $I_1,\ldots,I_t$ of $S$ of big height $b$, and let 
\[ I'=I_1+\cdots+I_{t-1}, \ I=I' + I_t, \text{ and } \ I''=I_1\cap I_t+\cdots+I_{t-1}\cap I_t.\] 
Since $\sqrt{I''}=\sqrt{I'\cap I_t}$, the Mayer-Vietoris sequence associated to $I'$ and $I_t$ has the form
\begin{equation} \label{MVsum: e}
\cdots \to H^i_{I''}(S)\to H^{i+1}_I(S) \to H^{i+1}_{I'}(S)\oplus H^{i+1}_{I_t}(S)\to \cdots.
\end{equation}
By our inductive hypothesis, $H^i_{I'}(S)=H^i_{I''}(S)=0$ for all $i\geq n-\down{n/b}+(t-1)$, and $H^i_{I_t}(S)=0$ for $i \geq n-\down{n/b}+1$.
Therefore, taking $i \geq n-\down{n/b}+(t-1)$ in \eqref{MVsum: e},  we conclude that $H^i_I(S)=0$ for all $i\geq n- \down{n/b}+t$.
\end{proof}

%%%%%%%%%%%%%%%

\noindent We can now prove the aforementioned version of the theorem of Huneke and Lyubeznik.

%%%%%%%%%%%%%%%

\begin{theorem}[cf.\ {\cite[Theorem 5.2]{H-L}}] \label{cdSVTconsequence: T}
Let $S$ be an $n$-dimensional complete unramified  regular local ring of mixed characteristic, with a separably closed residue field.
For an integer $1 \leq b < n$, let $\ctrOne = \down{(n-1)/b}$.  
Fix ideals $I_1,\ldots,I_\ctrOne$ of $S$, each of big height $b$, 
and let $J = I_1 + \cdots + I_\ctrOne$.
Suppose that the punctured spectrum of $S/J$ is disconnected, 
and that for every minimal prime $\p$ of $J$, $\dim(S/\p) \geq 3$.  
If $I=I_1\cap\cdots\cap I_\ctrOne$, then \[\cd(S, I)=n-\ctrOne.\]
\end{theorem}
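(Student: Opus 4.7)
The plan proceeds in two stages. In \emph{Stage 1}, I will show $\cd(S, J) = n - 1$. The upper bound is immediate from Lemma \ref{Lyubeznik 85: L}: since $t = \down{(n-1)/b} \leq \down{n/b}$, that lemma gives $\cd(S, J) < n - \down{n/b} + t \leq n$, hence $\cd(S, J) \leq n - 1$. The lower bound is where the new second vanishing theorem enters: by hypothesis the punctured spectrum of $S/J$ is disconnected and $\dim(S/\p) \geq 3$ for every minimal prime $\p$ of $J$, so Theorem \ref{SVTUnramifiedGeneral: T} yields $H^{n-1}_J(S) \neq 0$.

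In \emph{Stage 2}, I will deduce $\cd(S, I) = n - t$ by interpolating between $J$ and $I$ with the family of ideals
\[
K_k \;=\; I_1 + \cdots + I_{k-1} \;+\; \bigl(I_k \cap I_{k+1} \cap \cdots \cap I_t\bigr), \qquad k = 1, \ldots, t,
\]
so that $K_1 = I$ and $K_t = J$. The claim is that $\cd(S, K_k) = n - t + k - 1$ for every $k$, and the case $k = 1$ is the theorem. The argument is downward induction on $k$, with base case $k = t$ furnished by Stage 1. For the inductive step, I will write $A = I_1 + \cdots + I_{k-1}$, $B = I_k$, $C = I_{k+1} \cap \cdots \cap I_t$, and invoke the elementary identities $(A+B) + (A+C) = K_{k+1}$ and $\sqrt{(A+B) \cap (A+C)} = \sqrt{A + B\cap C} = \sqrt{K_k}$. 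Applying Mayer--Vietoris to $A+B$ and $A+C$, together with the standard observation that $\cd(S, \ideala \cap \idealb) = \cd(S, \ideala + \idealb) - 1$ whenever $\cd(S, \ideala)$ and $\cd(S, \idealb)$ are both strictly smaller than $\cd(S, \ideala + \idealb)$, reduces the step to two numerical bounds.

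The main obstacle is verifying those two bounds, namely $\cd(S, A+B), \cd(S, A+C) < n - t + k = \cd(S, K_{k+1})$. The first is immediate from Lemma \ref{Lyubeznik 85: L} applied to the $k$ big-height-$b$ ideals $I_1, \ldots, I_k$. The second is more delicate: since $\sqrt{A + C}$ agrees with the radical of $\bigcap_{j > k}(A + I_j)$, I plan to run a parallel downward-induction scheme on the $t - k$ auxiliary ideals $A + I_{k+1}, \ldots, A + I_t$, using that their partial sums all have the form $I_1 + \cdots + I_{k-1} + \sum_{j \in T} I_j$ for some $T \subseteq \{k+1, \ldots, t\}$ and are therefore sums of at most $t - 1$ of the original big-height-$b$ ideals, which Lemma \ref{Lyubeznik 85: L} bounds appropriately. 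This secondary induction will yield $\cd(S, A + C) \leq n - t + k - 1$, completing the inductive step; specializing to $k = 1$ then finishes the proof.
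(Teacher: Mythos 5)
Your argument is, up to relabeling, the paper's proof: with $I_j \mapsto I_{t-j+1}$ and $k = t - i + 1$, your interpolating family $K_k$ becomes the paper's $\ideala_i = I_1 \cap \cdots \cap I_i + I_{i+1} + \cdots + I_t$, your downward induction on $k$ matches its upward induction on $i$, your Mayer--Vietoris pair $(A+B, A+C)$ matches its $(I', I'')$, and your Stage 1 is its base case $i = 1$. The one soft spot is the sketched ``parallel downward-induction scheme'' for bounding $\cd(S, A+C)$: it is not clear this would close, since the Mayer--Vietoris bound $\cd(\ideala \cap \idealb) \leq \max\bigl(\cd(\ideala), \cd(\idealb), \cd(\ideala + \idealb) - 1\bigr)$ does not force the cohomological dimension to drop by one at every intersection step, and there is no analogue of the SVT base case to anchor such a secondary induction. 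But no such scheme is needed: apply Lemma \ref{Lyubeznik 85: L} directly to $A + C = I_1 + \cdots + I_{k-1} + \bigl(I_{k+1} \cap \cdots \cap I_t\bigr)$, regarded as a sum of $k$ ideals each of big height at most $b$; this yields $\cd(S, A+C) < n - \down{n/b} + k \leq n - t + k$ in a single stroke, exactly as the paper does for its analogous summand $I'' = I_1 \cap \cdots \cap I_{i-1} + I_{i+1} + \cdots + I_t$.
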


%%%%%%%%%%%%%%%

\begin{proof}
For $1 \leq i \leq \ctrOne$, let  $\ideala_i= I_1 \cap \cdots \cap I_i + I_{i+1} + \cdots +  I_\ctrOne$.  
As $b < n$, we have that $\down{ (n-1)/b } = \down{ n/b }$.
Then by Lemma \ref{Lyubeznik 85: L}, 
\begin{equation} \label{cdBound: e}
\cd(S, \ideala_i) < n- \down{n / b} - (\ctrOne - i + 1) = n - i + 1.
\end{equation} 
We proceed by induction on $i \geq 1$ to show that $\cd(S, \ideala_i)=n-i$, so that our desired statement is obtained by taking $i=\ctrOne$.

When $i=1$, we have that $\cd(S, \ideala_1) = n-1$ by Theorem \ref{SVTUnramifiedGeneral: T} since the punctured spectrum of $S/\ideala_1$ is disconnected by assumption.
Suppose that for some $1 < i \leq \ctrOne$, $\cd(S, \ideala_{i-1}) = n-i+1$.
Let 
\[
I'=I_i+\cdots + I_\ctrOne \ \text{ and } \ I''=I_1 \cap \cdots \cap I_{i-1} + I_{i+1}+\cdots+ I_\ctrOne. \]
The Mayer-Vietoris sequence associated to $I'$ and $I''$ has the form
\[
\cdots \to H^{n-i}_{\ideala_i}(S) \to H^{n-i+1}_{\ideala_{i-1}}(S)  \to H^{n-i+1}_{I'}(S) \oplus  H^{n-i+1}_{I'}(S) \to \cdots.    
\]
since $\ideala_{i-1}=I'+I''$ and $\sqrt{ \ideala_i }= \sqrt{ I'\cap I''}$.
By Lemma \ref{Lyubeznik 85: L}, the cohomological dimensions of both $I'$ and of $I''$ are less than $n-i+1$, so
\[H^{n-i+1}_{I'}(S) = H^{n-i+1}_{I'}(S) = 0.\]
Moreover, $H^{n-i+1}_{\ideala_{i-1}}(S) \neq 0$ by the inductive hypothesis.
Thus, $H^{n-i}_{\ideala_i}(S)$ is nonzero, and noting \eqref{cdBound: e}, we have that $\cd(S, \ideala_i) = n - i$.
\end{proof}

Our final application of the new SVT in this section allows us to conclude that a 
 certain mixed characteristic Lyubeznik number determines the number of connected components of the punctured spectrum of a ring. 
The analogous statement for traditional Lyubeznik numbers in the equal characteristic case is well known to experts; see \cite[Proposition 3.1]{Walther2} or \cite[Theorem 1]{Kawasaki2} for a proof in this case in the two-dimensional setting.

%%%%%%%%%%%%%%%

\begin{proposition}  \label{connectedComponents: P}
Let $S$ be an $n$-dimensional complete unramified  regular local ring of mixed characteristic, with separably closed residue field $K$. 
Let $I$ be an ideal of $S$ such that for every minimal prime $\p$ of $I$, $\dim(S/\p) \geq 3$.
If the punctured spectrum of $S/I$ has $t$ connected components, then  
$H^{n-1}_I(S)=E_S(K)^{\oplus t-1}$.
Thus, \[\newLyu_{0,1}(S/I)=t-1.\]
\end{proposition}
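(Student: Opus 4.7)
The plan is to proceed by induction on $t$, the number of connected components of the punctured spectrum of $S/I$. First, I would label these components $X_1,\ldots,X_t$ and, for each $j$, define $J_j$ to be the intersection of those minimal primes of $I$ whose vanishing locus meets $X_j$. Then $\sqrt{I}=\sqrt{J_1\cap\cdots\cap J_t}$, each $S/J_j$ has connected punctured spectrum (so Theorem \ref{SVTUnramifiedGeneral: T} applies), and $\sqrt{J_j+J_k}=\n$ whenever $j\neq k$, where $\n$ denotes the maximal ideal of $S$. Because every minimal prime of each $J_j$ is also a minimal prime of $I$, the hypothesis $\dim(S/\p)\geq 3$ is inherited by each $J_j$ and by every intersection encountered below.

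The base case $t=1$ is immediate: the punctured spectrum of $S/I$ is connected, so Theorem \ref{SVTUnramifiedGeneral: T} gives $H^{n-1}_I(S)=0=E_S(K)^{\oplus 0}$. For the inductive step $t\geq 2$, I would set $J=J_2\cap\cdots\cap J_t$. Combining the inclusion $(J_1+J_2)(J_1+J_3)\cdots(J_1+J_t)\subseteq J_1+J$ with $\sqrt{J_1+J_k}=\n$ for every $k\geq 2$ shows $\sqrt{J_1+J}=\n$. The punctured spectrum of $S/J$ has exactly $t-1$ connected components, so the inductive hypothesis yields $H^{n-1}_J(S)\cong E_S(K)^{\oplus(t-2)}$. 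Applying the Mayer-Vietoris sequence to $J_1$ and $J$ then gives
\[
H^{n-1}_\n(S)\to H^{n-1}_{J_1}(S)\oplus H^{n-1}_J(S)\to H^{n-1}_I(S)\to H^n_\n(S)\to H^n_{J_1}(S)\oplus H^n_J(S).
\]
Because $S$ is an $n$-dimensional regular local ring, $H^{n-1}_\n(S)=0$ and $H^n_\n(S)\cong E_S(K)$; Theorem \ref{SVTUnramifiedGeneral: T} forces $H^{n-1}_{J_1}(S)=0$; and the HLVT kills $H^n_{J_1}(S)$ and $H^n_J(S)$ since $\dim(S/J_1),\dim(S/J)\geq 3>0$. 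The sequence collapses to
\[
0\to E_S(K)^{\oplus(t-2)}\to H^{n-1}_I(S)\to E_S(K)\to 0,
\]
which splits because $E_S(K)^{\oplus(t-2)}$ is injective. Hence $H^{n-1}_I(S)\cong E_S(K)^{\oplus(t-1)}$, and the identity $\newLyu_{0,1}(S/I)=\dim_K\Hom_S(K,E_S(K)^{\oplus(t-1)})=t-1$ follows from $\Hom_S(K,E_S(K))\cong K$.

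The homological content is already packaged in the new SVT and the HLVT, so the main obstacle I foresee is the combinatorial bookkeeping: I must verify that the ideal-theoretic decomposition $\sqrt{I}=\sqrt{J_1\cap\cdots\cap J_t}$ honestly reflects the topological splitting of $\Spec^\circ(S/I)$, that each $S/J_j$ has a connected punctured spectrum (so that Theorem \ref{SVTUnramifiedGeneral: T} may be invoked for it), and that the dimension condition $\dim(S/\p)\geq 3$ propagates through the intersections appearing in the induction. Once these bookkeeping checks are in place, the two vanishing theorems and the injectivity of $E_S(K)$ combine to deliver the conclusion.
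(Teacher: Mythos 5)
Your proposal is correct and follows essentially the same approach as the paper's proof: induct on $t$, peel off one connected component at a time via ideals $J_j$ cutting out the components, apply the Mayer--Vietoris sequence to the peeled ideal and the remaining intersection, use Theorem~\ref{SVTUnramifiedGeneral: T} and the Hartshorne--Lichtenbaum vanishing theorem to kill the relevant terms, and split the resulting short exact sequence by injectivity of $E_S(K)^{\oplus(t-2)}$. The only differences are cosmetic (you isolate $J_1$ rather than $J_t$) and that you spell out the verification $\sqrt{J_1+J}=\n$ and the propagation of the dimension hypothesis a bit more explicitly than the paper does.
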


%%%%%%%%%%%%%%%

\begin{proof}
Fix ideals $J_1,\ldots,J_t$ of $S$ for which the complements of $\V(J_1), \ldots, \V(J_t)$ are precisely the connected components of the punctured spectrum of $S/I$; i.e., each complement is nonempty, they do not pairwise intersect, and their union is $\Spec^\circ(S/I)$.

We proceed to show that $H^{n-1}_I(S)=E_S(K)^{\oplus t-1}$ by induction on $t$, noting that the result follows from Theorem \ref{SVTUnramifiedGeneral: T} if $t=1$.
Fix $t > 1$, and assume that for any ideal $J$ of $S$ for which $\dim(S/\q) \geq 3$ for every minimal prime $\q$ of $J$, 
if $\Spec^\circ\(S/J\)$ has $t-1$ connected components, then $H^{n-1}_J(S)=E_S(K)^{\oplus t-2}$.  
In particular, this holds for $J=J_1\cap\ldots\cap J_{t-1}$.  

Consider the following piece of the Mayer-Vietoris sequence in local cohomology associated to $J$ and $J_t$:
\begin{equation} \label{inductiveMV: e}
\cdots \to H^{n-1}_{J+J_t}(S)\to H^{n-1}_{J}(S)\oplus H^{n-1}_{J_t}(S)\to H^{n-1}_{I}(S)\to H^{n}_{J+J_t}(S)\to 0.
\end{equation}
By Theorem \ref{SVTUnramifiedGeneral: T},  $H^{n-1}_{J_t}(S)=0$, and since $J_i + J_t$ is primary to the maximal ideal $\n$ of $S$ for all $1 \leq i < t$,
 we know that $J+J_t$ also $\n$-primary.
 Then from the inductive hypothesis and \eqref{inductiveMV: e}, we obtain the short exact sequence 
\[
0 \to E_S(K)^{\oplus t-2}\to H^{n-1}_{I}(S)\to E_S(K)\to 0,
\]
which splits since $E_S(K)^{\oplus t-2}$ is injective, so that  $H^{n-1}_I(S) \cong E_S(K)^{\oplus t-1}$.
\end{proof}

\section{Mixed characteristic Lyubeznik numbers of cohomologically isolated singularities} \label{SecIsoSing: S}

%%%%%%%%%%%%%%%%%%%%%%%%%%%%%%%%%%%%
%%%%%%%%%%%%%%%%%%%%%%%%%%%%%%%%%%%%

We expect the mixed characteristic Lyubeznik numbers to behave, at least in some ways, like the original Lyubeznik numbers.  
The authors have recently shown that for local rings of prime characteristic, they are ``almost always'' equal, in the sense that differences can only occur in small characteristics \cite{InjDim}. 
Moreover, the highest Lyubeznik number and the highest mixed characteristic Lyubeznik number agree in every example where both are computed \cite{NuWi1}.

In this section, we investigate the highest mixed characteristic Lyubeznik numbers of a collection of rings that includes those with isolated singularities.  
In Theorem \ref{highestLyuEqual: T}, we show that this number equals the highest Lyubeznik number if both are defined.  
We also show that it has similar properties to the original Lyubeznik numbers, as they relate to other Lyubeznik numbers (Theorem \ref{highestLowestLyu: T}), and to connectedness properties of spectra (Theorem \ref{connectedComponentsLyu: T}/Theorem \ref{connectedComponentsLyuIntro: T} from the introduction, 
and Theorem \ref{connectednessDimensionLyu: T}).

%%%%%%%%%%%%%%%%%%%%%%%%%%%%%%%%%%%%

\subsection{Cohomologically complete intersection ideals on the punctured spectrum of a ring}
We study quotients of regular local rings modulo ideals called \emph{cohomologically complete intersections} on the punctured spectrum of the ring, which were introduced by Hellus and Schenzel \cite{CCI}.  
The vanishing of the local cohomology modules of the ring with support in these ideals mimics that of those with support in complete intersection ideals.

\begin{definition}[Cohomologically complete intersection ideal (on the punctured spectrum)] \label{CCI: D}
Fix an ideal $I$ of a regular 
local ring $(S,\n)$, and let $\ctrOne = \dim(S) -\dim(S/I)$.   
Then $I$ is a \emph{cohomologically complete intersection (CCI) ideal of $S$} if $H^j_I(S)=0$ for all $j> \ctrOne$.
We call $I$ a \emph{CCI ideal on the punctured spectrum of $S$} under the weaker condition that $\Supp_S (H^j_I(S)) \subseteq \{ \n \}$ for all $j > \ctrOne$.
\end{definition}

%%%%%%%%%%%%%%%%%%%%%%%%%%%%%%%%%%%%

\begin{example} \label{example1CCI: R}
If $S$ is a regular local ring of characteristic $p>0$ and $I$ is an ideal of $S$ for which $S/I$ is Cohen-Macaulay, then
$I$ is a CCI ideal of $S$ \cite[Theorem III.4.1]{P-S}.  
\end{example}

Given a CCI ideal $I$ on the punctured spectrum of a regular local ring $S$, 
Blickle has called the quotient $S/I$ a \emph{cohomologically isolated singularity ring}  \cite{BlickleCCI}.
A complete local ring that is a complete intersection at every point of its punctured spectrum is isomorphic to such a quotient; so is
a complete local ring with an isolated singularity.  

\begin{example}[Isolated singularities] \label{example2CCI: R}
Suppose that $R$ is a complete local ring of dimension at least three for which $R_\p$ is regular for every non-maximal prime ideal $\p$ of $R$. 
Then $R$ is $S_2$ and $R_1$, it must be normal, and thus a domain.
Use the Cohen structure theorems to write $R \cong S/I$, where $I$ is an ideal of a complete regular local ring $S$ \cite{Cohen}.
Then for every non-maximal prime ideal $\q$ of $S$,
$S_\q/I S_\q$ is a regular ring, so that $I S_\q$ is a  complete intersection ideal of $S_\q$.
Hence, $(H^j_I(S) )_\q \cong H^j_I(S_\q)=0$ for all $j\neq \dim(S)-\dim(R)$, and $I$ is a CCI ideal on the punctured spectrum of $S$.
\end{example}

\begin{proposition} \label{CCIequidimensional: L}
If $I$ is a CCI ideal on the punctured spectrum of a regular local ring $S$, then $S/I$ is equidimensional.
\end{proposition}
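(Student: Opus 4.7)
The plan is to argue by contradiction. Set $c = \dim(S) - \dim(S/I)$, and suppose that $R = S/I$ is not equidimensional, so some minimal prime $\p$ of $I$ has $\dim(S/\p) < \dim(R)$. The case $\dim(R) = 0$ forces every minimal prime of $I$ to equal the maximal ideal $\n$ of $S$, so the conclusion is vacuous there; hence I may assume $\dim(R) \geq 1$, which in particular guarantees that $\p \neq \n$.

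The first step is to apply the dimension formula in the regular local domain $S$: since $S$ is a catenary local domain, every prime $\q$ of $S$ satisfies $\Ht(\q) + \dim(S/\q) = \dim(S)$. Specializing to $\q = \p$ and using $\dim(S/\p) < \dim(R)$ yields
\[
\Ht(\p) \;=\; \dim(S) - \dim(S/\p) \;>\; \dim(S) - \dim(R) \;=\; c.
\]

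The second step is to localize at $\p$ to produce a nonzero stalk of $H^j_I(S)$ for some $j > c$ at a non-maximal prime. Because $\p$ is a minimal prime of $I$, the ideal $IS_\p$ is $\p S_\p$-primary, and since local cohomology commutes with localization,
\[
H^j_I(S)_\p \;\cong\; H^j_{IS_\p}(S_\p) \;\cong\; H^j_{\p S_\p}(S_\p)
\]
for every $j$. The ring $S_\p$ is a regular local ring whose maximal ideal $\p S_\p$ has height $\Ht(\p)$, so by Grothendieck's non-vanishing theorem for top local cohomology, $H^{\Ht(\p)}_{\p S_\p}(S_\p) \neq 0$. Consequently $\p \in \Supp_S\bigl(H^{\Ht(\p)}_I(S)\bigr)$ with $\Ht(\p) > c$ and $\p \neq \n$, directly contradicting the hypothesis that $\Supp_S(H^j_I(S)) \subseteq \{\n\}$ for all $j > c$.

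I do not anticipate any serious obstacle in executing this plan; the argument is a direct combination of the dimension formula in a regular local ring with the localization behavior of local cohomology and Grothendieck's non-vanishing. The only points that warrant care are verifying the dimension formula (which uses that $S$ is an equidimensional catenary domain, both automatic for a regular local ring) and peeling off the trivial zero-dimensional case so that the offending minimal prime $\p$ is genuinely non-maximal.
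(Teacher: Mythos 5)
Your argument is correct, and it takes a genuinely different route from the paper's. The paper separates the minimal primes of $I$ into a top-dimensional piece $J_1$ and a lower-dimensional piece $J_2$, then feeds $\sqrt I = J_1 \cap J_2$ into the Mayer--Vietoris sequence: the depth-based vanishing of $H^{n-e}_{J_1+J_2}(S)$ (where $e = \dim(S/J_2)$) forces $H^{n-e}_{J_2}(S)$ to inject into $H^{n-e}_I(S)$, and then the observation that $\dim\Supp_S H^{n-e}_{J_2}(S) = e > 0$ contradicts the CCI hypothesis. You instead localize directly at a single offending minimal prime $\p$, use $\Ht(\p) + \dim(S/\p) = \dim(S)$ to see $\Ht(\p) > \dim(S) - \dim(S/I)$, and invoke Grothendieck non-vanishing in $S_\p$ to land $\p$ in the support of a forbidden local cohomology module. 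Both proofs ultimately rest on the same two pillars --- the dimension formula in a regular local ring and Grothendieck non-vanishing at the top degree --- but yours bypasses Mayer--Vietoris entirely and is shorter and more transparent; the paper's version exposes the direct summand / injection $H^{n-e}_{J_2}(S) \hookrightarrow H^{n-e}_I(S)$, which is structural information not visible in your pointwise argument, though that information is not used again. Your bookkeeping on the case $\dim(S/I) = 0$ and the fact that $\p \neq \n$ is correct: in a local ring the maximal ideal is a minimal prime of $I$ only when $I$ is $\n$-primary, so nontrivial minimal primes are automatically non-maximal once $\dim(S/I) \geq 1$.
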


\begin{proof}
Let $d=\dim(S/I)$, and list the minimal primes $\p$ of $I$ for which $\dim(S/\p)=d$ as $\p_1,\ldots, \p_\ctrOne$.  
Assume, by way of contradiction, that $S/I$ is not equidimensional, and list the minimal primes $\q$ of $I$ for which $\dim(S/\q)<d$ as $\q_1,\ldots, \q_\ctrTwo$, so that $\ctrTwo \geq 1$.

Let $J_1=\p_1\cap\cdots\cap \p_\ctrOne$ and $J_2=\q_1\cap \cdots \cap \q_\ctrTwo$, so that $\sqrt{I}=J_1\cap J_2.$
Then  $\dim(R/J_1)=d$, and let $e=\dim(R/J_2)$, so that $e<d$. We have the Mayer-Vietoris sequence associated to $J_1$ and $J_2$, 
\[ \cdots \to H^{n-e}_{J_1+J_2}(S)\to H^{n-e}_{J_1}(S)\oplus H^{n-e}_{J_2}(S)\to H^{n-e}_I (S)\to \cdots. \] 
By our choice of $J_1$ and $J_2$, $\dim(S/(J_1+J_2))<e$, so that $H^{n-e}_{J_1+J_2}(S)=0$.
As a consequence,  $H^{n-e}_{J_2}(S)$ injects into $H^{n-e}_I (S)$.  Therefore, 
\[
 \dim \Supp_S(H^{n-e}_I (S))\geq \dim \Supp_S(H^{n-e}_{J_2} (S))=e>0.
\]
Yet, $I$ is a CCI ideal on $\Spec^\circ(S)$, so $\dim \Supp_S(H^{n-e}_I(S)) \leq 0$, a contradiction.
\end{proof}

 The following lemma allows us to reduce the proof of  Theorem \ref{highestLyuEqual: T} to cases of small dimension.

%%%%%%%%%%%%%%%%%%%%%%%%%%%%%%%%%%%%

\begin{lemma}\label{AddElement: L}
Given a CCI ideal $I$ on the punctured spectrum of a regular local ring $S$,
if $f\in S$ is not in any minimal prime of  $I$, then $I+fS$ is also a CCI ideal on $\Spec^\circ(S)$.
\end{lemma}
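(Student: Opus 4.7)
The plan is to read off the desired support condition from the long exact sequence
\[
\cdots \to H^{j-1}_I(S_f) \to H^j_{I+fS}(S) \to H^j_I(S) \to H^j_I(S_f) \to \cdots
\]
recalled in Subsection \ref{Loc Coh}, exploiting the fact that $f$ lies in $\n$ and therefore kills (upon localization) any module already supported at $\{\n\}$.

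First, I would dispose of the trivial case: if $f \notin \n$, then $f$ is a unit, $I + fS = S$, and there is nothing to prove; so assume $f \in \n$. Next, set $n = \dim(S)$ and $d = \dim(S/I)$, so that by hypothesis $\Supp_S(H^j_I(S)) \subseteq \{\n\}$ for every $j > n - d$. By Proposition \ref{CCIequidimensional: L}, $S/I$ is equidimensional, so every minimal prime $\p$ of $I$ satisfies $\dim(S/\p) = d$; since $f$ avoids each such $\p$, Krull's principal ideal theorem in the catenary ring $S$ yields $\dim(S/(\p + fS)) = d - 1$, and hence $\dim(S/(I + fS)) = d - 1$. Consequently, establishing that $I + fS$ is a CCI ideal on $\Spec^\circ(S)$ amounts to proving
\[
\Supp_S\!\bigl(H^j_{I+fS}(S)\bigr) \subseteq \{\n\} \quad \text{for all} \quad j > n - d + 1.
\]

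For this, observe that whenever $k > n - d$, the CCI-on-$\Spec^\circ(S)$ hypothesis says $H^k_I(S)$ is supported in $\{\n\}$, and because $f \in \n$, localizing at $f$ annihilates it: $H^k_I(S_f) \cong H^k_I(S)_f = 0$. In the range $j > n - d + 1$ we have $j - 1 > n - d$ as well, so both $H^{j-1}_I(S_f)$ and $H^j_I(S_f)$ vanish. The displayed long exact sequence then collapses to an isomorphism $H^j_{I+fS}(S) \cong H^j_I(S)$, whose target is supported in $\{\n\}$ by hypothesis, proving the claim.

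No step here poses a genuine obstacle; the argument is essentially a careful bookkeeping of degrees. The one point that requires attention is the interplay between the cohomological-dimension threshold $n - d$ for $I$ and the shifted threshold $n - d + 1$ for $I + fS$: the vanishing of $H^{k}_I(S_f)$ must be available both for $k = j$ and for $k = j - 1$, and it is precisely the membership $f \in \n$ (guaranteed after the initial reduction) that secures this from the hypothesis on $I$.
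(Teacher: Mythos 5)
Your proposal is correct and takes essentially the same route as the paper: both rest on the localization long exact sequence together with the observation that $f\in\n$ forces $H^j_I(S_f)=0$ for $j>n-d$, giving $H^j_{I+fS}(S)\cong H^j_I(S)$ in the relevant range. Your explicit disposal of the unit case and your appeal to Proposition~\ref{CCIequidimensional: L} are harmless extras (the latter is not actually needed: since $f$ avoids every minimal prime, any minimal prime $\p$ with $\dim(S/\p)=d$ already gives $\dim(S/(I+fS))=d-1$).
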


%%%%%%%%%%%%%%%%%%%%%%%%%%%%%%%%%%%%

\begin{proof}
Let $n=\dim(S)$ and $d=\dim(S/I)$.  As $f$ belongs to no minimal prime of $I$, we have that $\dim\( S/(I+fS)\) =d-1$.
If $\n$ denotes the maximal ideal of $S$, then $\Supp_S (H^j_I(S) ) \subseteq \{ \n\}$ for $j>n-d$, so $H^j_I(S_f)=0$ for $j >n-d$.  Then 
by the long exact sequence 
\[
\cdots \to H^{j-1}_I(S_f) \to H^j_{I+fS}(S)\to H^j_I(S)\to H^j_I(S_f)\to\cdots,
\]
$H^j_{I+fS}(S) \cong H^j_I(S)$ for $j>n-d+1 = n-(d-1)$, and $\Supp_S ( H^j_{I+fS}(S) ) \subseteq \{ \n \}$ for these values of $j$.
\end{proof}

%%%%%%%%%%%%%%%%%%%%%%%%%%%%%%%%%%%%

\subsection{The highest mixed characteristic Lyubeznik number}
Recall that for a complete local ring $R$ containing a field, if $\k$ is a coefficient field of $R$, then there exists a finite separable extension field $L$ of $\k$ for which 
the highest Lyubeznik number of $R$ counts the connected components of the Hochster-Huneke graph of $R \otimes_\k L$ \cite[Theorem 4.2]{H-L}, \cite[Section 2]{LyuInvariants}; see Remark \ref{Rem HHgraph}.
Motivated by this fact, 
with an eye toward comparing the highest traditional and mixed characteristic Lyubeznik numbers, 
we make the following observation about DVR extensions induced by field extensions.

\begin{remark}\label{SameUnderExt: R}
Given an extension of fields  $\kk \subseteq \LL$ of characteristic $p>0$, and 
complete Noetherian DVRs $(V, pV, \kk)$ and $(W, pW, \LL)$ of mixed characteristic $p$, there exists a natural injective local ring homomorphism 
$\phi:V \hookrightarrow W$ \cite[Theorem 29.2]{Matsumura}. 
Then $W$ is a torsion-free  $V$-module.  Since $V$ is also a DVR,
$W$ must be a flat $V$-algebra. 

Let $S =V\llbracket x_1,\ldots,x_n\rrbracket $, $T = W\llbracket x_1,\ldots,x_n\rrbracket$, and 
$\varphi:S\to T$ the injective ring homomorphism satisfying $\varphi|_V=\phi$ and $\varphi(x_i)=x_i$ for each $1 \leq i \leq n$.
Since $\phi$ is flat, so is $\varphi$.
Fix $i, j \geq 0$, and, given an ideal $I$ of $S$, let $\alpha =\dim_\kk\Ext^i_{S}(\kk,H^j_I(S))$, which is finite \cite[Theorem 1]{LyuUMC}, \cite[Theorem 1.1]{NunezPR}.  
Since $\kk$ is finitely presented over $S$ and $T$ is a flat $S$-algebra, 
\[
\LL^{\alpha} \cong  \kk^{\alpha}  \otimes_S T \cong \Ext^i_S(\kk,H^j_I(S))\otimes_{S} T \cong \Ext^i_T(L,H^j_I(S) \otimes_{S} T) \cong \Ext^i_T( \LL,H^j_I(T) ).
\]
Thus, 
$\alpha=\dim_\LL\Ext^i_{T}(\LL,H^j_I(T))$. 
\end{remark}

In the same vein, if $A = \kk\llbracket x_1,\ldots,x_n\rrbracket$ and $B = \LL\llbracket x_1,\ldots,x_n\rrbracket$, then for an ideal $I$ of $A$ and all integers $i, j \geq 0$, 
\[
\dim_\kk\Ext^i_{A}(\kk,H^j_I(A)) = \dim_\LL\Ext^i_{B}(\LL,H^j_I(B)). 
\]

%%%%%%%%%%%%%%%%%%%%%%%%%%%%%%%%%%%%

Before returning to our investigation of CCI ideals on punctured spectra, we remove a hypothesis of a result on the agreement of the mixed characteristic and the traditional Lyubeznik numbers in low dimension \cite[Proposition 5.2]{NuWiMixChar};  \emph{loc.\,cit.}\ requires a separably closed residue field. 
This extension serves as the basis case of our inductive proof of Theorem \ref{highestLyuEqual: T}.

%%%%%%%%%%%%%%%%%%%%%%%%%%%%%%%%%%%%

\begin{proposition}[cf.\ {\cite[Proposition 5.2]{NuWiMixChar}}] \label{LyuEqualSmallDim: P}
Fix an $n$-dimensional complete unramified regular local ring $S$ of mixed characteristic $p>0$, with residue field $K$.
Assume that $I$ is an ideal of $S$, $p \in I$, and $\dim(S/I)\leq 2$.  If $A = S/pS$, then for all $i, j \geq 0$, 
\[
\lambda_{i,j}(S/I) = \dim_\kk\Ext^i_A (\kk,H^{n-j-1}_{IA} (A)) = \dim_\kk\Ext^i_S(\kk,H^{n-j}_I (S)) = \widetilde{\lambda}_{i,j}(S/I) .
\]
\end{proposition}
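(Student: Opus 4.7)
The first equality, $\lambda_{i,j}(S/I) = \dim_\kk \Ext^i_A(\kk, H^{n-j-1}_{IA}(A))$, is immediate from the definition of the Lyubeznik numbers applied to the presentation $S/I \cong A/(IA)$, since $A$ is an $(n-1)$-dimensional complete regular local ring containing the field $\kk$. The last equality is likewise just the definition of the mixed characteristic Lyubeznik number $\newLyu_{i,j}(S/I)$ via the presentation $S\twoheadrightarrow S/I$. The substantive content of the proposition is therefore the middle equality, and the plan is to reduce to the separably closed residue field case treated in \cite[Proposition 5.2]{NuWiMixChar}.

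By Cohen's structure theorem, write $S \cong V\llbracket x_1, \ldots, x_{n-1} \rrbracket$, where $(V, pV, \kk)$ is a complete unramified DVR of mixed characteristic $p$; this yields $A \cong \kk\llbracket x_1, \ldots, x_{n-1} \rrbracket$. Fix a separable closure $\LL$ of $\kk$, let $(W, pW, \LL)$ be the corresponding complete unramified DVR, and set $T = W\llbracket x_1, \ldots, x_{n-1} \rrbracket$ and $B = T/pT \cong \LL\llbracket x_1, \ldots, x_{n-1} \rrbracket$. Then $T$ is an $n$-dimensional complete unramified regular local ring of mixed characteristic with separably closed residue field $\LL$; the induced homomorphisms $S \hookrightarrow T$ and $A \hookrightarrow B$ are faithfully flat with zero-dimensional closed fibers; $p \in IT$; and $\dim(T/IT) = \dim(S/I) \leq 2$.

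Applying Remark \ref{SameUnderExt: R} to the mixed-characteristic extension $S \hookrightarrow T$ and to the equal-characteristic extension $A \hookrightarrow B$ respectively gives
\[
\dim_\kk \Ext^i_S(\kk, H^{n-j}_I(S)) = \dim_\LL \Ext^i_T(\LL, H^{n-j}_{IT}(T))
\]
and
\[
\dim_\kk \Ext^i_A(\kk, H^{n-j-1}_{IA}(A)) = \dim_\LL \Ext^i_B(\LL, H^{n-j-1}_{IB}(B)).
\]
Since $T$ is a complete unramified regular local ring of mixed characteristic with separably closed residue field and $IT$ is an ideal of $T$ with $p\in IT$ and $\dim(T/IT)\le 2$, the hypotheses of \cite[Proposition 5.2]{NuWiMixChar} are satisfied by $T$ and $IT$, so the middle equality of the proposition holds between the right-hand sides above. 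Chaining these three identities proves the middle equality over $\kk$.

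The main obstacle is purely the bookkeeping needed to confirm that the base change is faithfully flat, that it preserves the dimension hypothesis $\dim(S/I) \leq 2$, and that both halves of Remark \ref{SameUnderExt: R} apply in parallel. Once these details are in hand, the appeal to the separably closed case of \cite[Proposition 5.2]{NuWiMixChar} is mechanical, and there is no need to revisit the long exact sequences that underlie the original argument.
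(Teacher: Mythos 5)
Your proposal is correct and follows essentially the same route as the paper's proof: base change to the separable closure of the residue field via the DVR extension from Remark \ref{SameUnderExt: R}, then invoke the separably closed case of \cite[Proposition 5.2]{NuWiMixChar}. The only cosmetic difference is that you explicitly flag the outer two equalities as holding by definition (taking $A/IA$ and $S/I$ as the presentations defining $\lambda_{i,j}$ and $\newLyu_{i,j}$, respectively) before reducing the middle equality, whereas the paper treats all three equalities uniformly through the reduction; both treatments are sound.
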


%%%%%%%%%%%%%%%%%%%%%%%%%%%%%%%%%%%%

\begin{proof} 
Let $R=S/I$, and $d = \dim(R)$.
Under the assumption that $K$ is separably closed, the statement is explicitly justified in the proof of \cite[Proposition 5.2]{NuWiMixChar}, although only stated there for $i = j = d$.
We aim to reduce to this case.  

Now we turn to the general case.
Let $V$ be a complete Noetherian DVR with maximal ideal $pV$ and residue field $K$.
Let $\LL$ denote the separable closure of $K$, and $W$ a corresponding DVR as 
in Remark \ref{SameUnderExt: R}.
By this same remark, for all $i, j \geq 0$, 
\begin{equation} \label{passToSeparable: e}
\lambda_{i,j}(R) =  \lambda_{i,j}(R \widehat{\otimes}_\kk \LL) \ \hbox{ and } \
\widetilde{\lambda}_{i,j}(R) =  \widetilde{\lambda}_{i,j}(R \widehat{\otimes}_\kk \LL).
\end{equation}
Since the residue field of $S \widehat{\otimes}_V W$ is separably closed, the result \cite[Proposition 5.2]{NuWiMixChar} now applies, and  
\begin{equation}
\begin{aligned} \label{separableHighest: e}
\lambda_{i,j}(R \widehat{\otimes}_\kk \LL) &= \dim_\LL \Ext^i_{A\widehat{\otimes}_\kk \LL} (\LL,H^{n-j-1}_I (A \widehat{\otimes}_\kk \LL) ) \\ 
&= \dim_\LL \Ext^i_{S \widehat{\otimes}_V W} (\LL,H^{n-j}_I (S\widehat{\otimes}_V W ) )
&=\widetilde{\lambda}_{i,j}(R \widehat{\otimes}_\kk \LL).
\end{aligned}
\end{equation}
Combining \eqref{passToSeparable: e} and \eqref{separableHighest: e}, we have that $\lambda_{i,j}(R) = \widetilde{\lambda}_{i,j}(R)$.
\end{proof}

%%%%%%%%%%%%%%%%%%%%%%%%%%%%%%%%%%%%

To prove two of the main results of this section, Theorems \ref{highestLyuEqual: T} and \ref{connectedComponentsLyu: T} (cf.\ Theorem \ref{connectedComponentsLyuIntro: T} from the introduction), 
we use properties of minimal injective resolutions of certain local cohomology modules in mixed characteristic.  
In particular, we call upon Zhou's work on injective dimension in this setting \cite{Zhou}.

\begin{remark} \label{ZhouProofInjective: R}   
Suppose that $I$ is an ideal of an unramified regular local ring $(S,\n)$ of mixed characteristic.  Zhou proved that for all $j \geq 0$, 
\[
\InjDim_S(H^j_I(S)) \leq \dim \Supp_S(H^j_I(S)) + 1
\]
\cite[Theorem 5.1]{Zhou}; we note that this inequality is further investigated in the authors' recent work \cite{InjDim}.
Suppose that $E^\bullet$ is a minimal injective resolution of $H^j_I(S)$, so that the $i^\text{th}$ cohomology module of the complex
\[
0 \to H^0_\n(E^0) \overset{\varphi_{0}}{\to}  H^0_\n(E^1)  \overset{\varphi_{1}}{\to}  H^0_\n(E^2)  \to \cdots 
\]
computes $H^i_\n(H^j_I(S))$.  As part of Zhou's proof of \emph{loc.\,cit.}, he proves inductively that for all $\ctrOne \geq 0$, the image of $\varphi_\ctrOne$  is an injective $S$-module.    
\end{remark}

%%%%%%%%%%%%%%%%%%%%%%%%%%%%%%%%%%%%

Now we can prove our theorem on the agreement of the original and the mixed characteristic Lyubeznik numbers.

\begin{theorem}\label{highestLyuEqual: T}
Let $S$ be a complete unramified regular local ring of mixed characteristic $p>0$.  
Given a CCI ideal $I$ on $\Spec^\circ(S)$ for which $p \in I$,
\[\newLyu_{d,d}(S/I) = \lambda_{d,d}(S/I),\] where $d = \dim(S/I)$.  
Thus, $\newLyu_{d,d}(S/I)$ equals the number of connected components of the Hochster-Huneke graph of the  completion of the strict Henselization of $S/I$.
\end{theorem}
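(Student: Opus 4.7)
We argue by induction on $d = \dim(S/I)$. The base case $d \leq 2$ is Proposition \ref{LyuEqualSmallDim: P}, which yields equality of all traditional and mixed characteristic Lyubeznik numbers in low dimension, and in particular of the highest ones. For the inductive step with $d \geq 3$, we apply Lemma \ref{AddElement: L}: by prime avoidance, choose $f \in \n$ lying outside every minimal prime of $I$, and set $J = I + fS$. Then $J$ is a CCI ideal on $\Spec^\circ(S)$ with $\dim(S/J) = d - 1$ and $p \in J$, so the inductive hypothesis yields $\newLyu_{d-1, d-1}(S/J) = \lambda_{d-1, d-1}(S/J)$. It then suffices to establish the two preservation statements $\newLyu_{d,d}(S/I) = \newLyu_{d-1,d-1}(S/J)$ and $\lambda_{d,d}(S/I) = \lambda_{d-1,d-1}(S/J)$.

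For the mixed characteristic preservation, we analyze the long exact sequence in local cohomology for the pair $(I, f)$ from Subsection \ref{Loc Coh}. Proposition \ref{CCIequidimensional: L} ensures $S/I$ is equidimensional, so $\Ht(I) = n - d$ and hence $H^{n-d-1}_I(S) = 0$; the CCI-on-punctured-spectrum hypothesis makes $H^j_I(S)$ an $\n$-torsion $S$-module (hence killed by localization at $f \in \n$) for $j > n - d$. These observations collapse the long exact sequence to a short complex involving $M := H^{n-d}_I(S)$, $M_f$, $H^{n-d+1}_J(S)$, and $H^{n-d+1}_I(S)$. Applying $\Ext^\bullet_S(K, -)$, and exploiting both that $\Ext^i_S(K, M_f) = 0$ for all $i$ (since $K_f = 0$) and that $\newLyu_{d+1, d}(S/I) = 0$ forces $\Ext^{d+1}_S(K, M) = 0$, one extracts a long exact sequence linking $\Ext^d_S(K, M)$ with $\Ext^{d-1}_S(K, H^{n-d+1}_J(S))$. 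The requisite cancellation of the remaining Ext-terms is forced by Zhou's structural result (Remark \ref{ZhouProofInjective: R}): because each transition map in the complex computing $H^\bullet_\n(M)$ has injective image, the local cohomology $H^i_\n(M)$ is a finite direct sum of copies of $E_S(K)$ for each $i$, and this rigidity makes the connecting maps in the Ext-sequence tractable. The analogous equal-characteristic preservation is proved by the parallel argument over $A = S/pS$, using the equicharacteristic injective dimension bound $\InjDim_A(H^j_{IA}(A)) \leq \dim\Supp_A H^j_{IA}(A)$ in place of Zhou's theorem.

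The principal obstacle is this cancellation step: the intervening Ext-terms involve $H^{n-d+1}_I(S)$, whose Bass numbers at $\n$ are precisely the ``lower'' mixed characteristic Lyubeznik numbers $\newLyu_{\bullet, d-1}(S/I)$, and one must show that they do not contribute at the top Bass degree. Once the first equality of the theorem is established, the second statement follows at once: since $p \in I$, the ring $S/I$ contains the residue field of $A$, so Remark \ref{Rem HHgraph} expresses $\lambda_{d,d}(S/I)$ as the number of connected components of the Hochster-Huneke graph of $\widehat{(S/I)^{sh}}$, which by the first part equals $\newLyu_{d,d}(S/I)$.
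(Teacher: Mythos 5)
Your high-level strategy coincides with the paper's: induct on $d$ with base case from Proposition~\ref{LyuEqualSmallDim: P}, cut by an element $f\in\n$ outside the minimal primes of $I$ via Lemma~\ref{AddElement: L}, split the resulting four-term exact sequence through its middle image, and transfer top Ext groups across the two induced short exact sequences. However, there are two substantive gaps in what you propose.

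\textbf{The cancellation step.} You correctly locate the obstruction --- the intervening $\Ext$ terms involve $H^{n-d+1}_I(S)$ --- but the tool you cite, Remark~\ref{ZhouProofInjective: R} on the images of the maps in the $\n$-torsion subcomplex, is not what forces the collapse. The result the paper actually uses is Zhou's injective dimension bound \cite[Theorem 5.1]{Zhou}: since $I$ is CCI on $\Spec^\circ(S)$, $\Supp_S H^{n-d+1}_I(S)\subseteq\{\n\}$, hence $\InjDim_S H^{n-d+1}_I(S)\le 1$, so $\Ext^i_S(K,H^{n-d+1}_I(S))=0$ for all $i\ge 2$. Combined with $d>2$ and the vanishing of $\Ext^\bullet_S(K,H^{n-d}_I(S_f))$, this is what yields
\[
\Ext^{d}_S(K,H^{n-d}_I(S))\cong \Ext^{d-1}_S(K,M)\cong \Ext^{d-1}_S(K,H^{n-d+1}_{I+fS}(S)),
\]
and hence $\newLyu_{d,d}(S/I)=\newLyu_{d-1,d-1}(S/(I+fS))$. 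The ``rigidity of $H^i_\n(M)$'' reasoning you sketch does not deliver this.

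\textbf{The equal-characteristic preservation.} You propose to prove $\lambda_{d,d}(S/I)=\lambda_{d-1,d-1}(S/(I+fS))$ by ``the parallel argument over $A=S/pS$.'' This does not go through as stated: the hypothesis that $I$ is CCI on $\Spec^\circ(S)$ does \emph{not} imply that $IA$ is CCI on $\Spec^\circ(A)$. The long exact sequence coming from $0\to S\xrightarrow{p}S\to A\to 0$ controls $\Supp_A H^j_{IA}(A)$ for $j>n-d$, but not for $j=n-d$ (the index one above $\dim(A)-\dim(A/IA)=n-1-d$), since $\operatorname{coker}(p\colon H^{n-d}_I(S)\to H^{n-d}_I(S))$ need not be $\n$-torsion. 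So the vanishing hypotheses your mirrored argument would require are not available. The paper avoids this issue entirely by invoking W.~Zhang's \cite[Proposition~2.1]{W}, which directly gives $\lambda_{d,d}(S/I)=\lambda_{d-1,d-1}(S/(I+rS))$; but this in turn requires a more careful choice of the cutting element $r$: in addition to lying outside every minimal prime of $I$, $r$ must avoid every minimal prime of $H^{n-d+1}_I(S/pS)$ (when $\n$ is not the only associated prime of that module). Your proposed $f$ satisfies only the first condition, and the mismatch matters precisely because it is Zhang's proposition, not your parallel computation over $A$, that closes the loop.

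Your reduction of the final statement (via Remark~\ref{Rem HHgraph} and the identity $\lambda_{d,d}=\newLyu_{d,d}$) is correct and matches the paper.
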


%%%%%%%%%%%%%%%%%%%%%%

\begin{proof}
Proposition \ref{LyuEqualSmallDim: P} establishes the statement when $\dim(S/I) \leq 2$;
we proceed by induction on $\dim(S/I)$.
Fix $d > 2$, and assume that for every CCI ideal $J$ on $\Spec^\circ(S)$ containing $p$, and for which $\dim(S/J) = d - 1$, we have that $\newLyu_{d-1,d-1}(S/J) = \lambda_{d-1,d-1}(S/J)$.
Let $I$ be a CCI ideal on $\Spec^\circ(S)$ that contains $p$, and such that $\dim(S/I) = d$.

Let $\n$ denote the maximal ideal of $S$.  
Fix $r \in \n$ with the following property:  $r$ is in no minimal prime of $I$, and if $\n$ is not the only associated prime of $H^{n-d+1}_I(S/pS)$ over $S$, 
$r$ is also in no minimal prime of this module. 
Such an element exists by prime avoidance, since $H^{n-d}_I(S/pS)$ has finitely many associated primes \cite[Corollary 2.3]{Huneke}.
The ideal $I+rS$ is a CCI ideal on $\Spec^\circ(S)$ by Lemma \ref{AddElement: L}, and $\dim(S/(I+rS)) =d-1$ 
since $r$ is a nonzerodivisor on $S/I$.
We claim that 
\begin{equation} \label{clm} \newLyu_{d,d}(S/I)=\newLyu_{d-1,d-1}(S/(I+rS)). \end{equation}
In fact, once \eqref{clm} is established, we can conclude that $\lambda_{d,d}(S/I) = \newLyu_{d,d}(S/I)$:   
As $S/I$ is a $d$-dimensional local ring containing a field, due to our choice of $r$,  
we may apply\footnote{ Although Section 2 of \cite{W} sets down some blanket hypotheses on the ring $S/I$ that our ring may not satisfy, here we satisfy all that are needed in the proof of Proposition 2.1.} \cite[Proposition 2.1]{W}, 
which says that $\lambda_{d,d}(S/I)=\lambda_{d-1,d-1}(S/(I+rS))$.
Then by the inductive hypothesis and \eqref{clm}, 
\[
\lambda_{d,d}(S/I) =\lambda_{d-1,d-1}(S/(I+rS))   =\newLyu_{d-1,d-1}(S/(I+rS))  = \newLyu_{d,d}(S/I).
\]

Toward proving \eqref{clm}, consider the long exact sequence 
\begin{equation}  \label{LEScompare: e}
\cdots \to H^i_{I+rS}(S) \to H^i_I(S)\to H^i_I(S_r)\to H^{i+1}_{I+rS}(S)\to \cdots.
\end{equation}
Since the depth of $S$ on $I+ rS$ is $n - \dim(S/(I+rS)) = n - d + 1$,  $H^{n-d}_{I+rS}(S)$ vanishes.
Moreover, $H^{n-d+1}_I(S)_r$ vanishes since $I$ is a CCI ideal on $\Spec^\circ(S)$, and $r \in \n$.  
Thus, from \eqref{LEScompare: e}, we obtain the exact sequence
\begin{equation*} 
0 \to H^{n-d}_I(S)\to H^{n-d}_I(S_r)\to H^{n-d+1}_{I+rS}(S)\to H^{n-d+1}_I(S) \to 0.
\end{equation*}
Let $M = \IM( H^{n-d}_I(S_r)\to H^{n-d+1}_{I+rS}(S) )$, so that we have the short exact sequences
\begin{align*}
&0\to H^{n-d}_I(S)\to H^{n-d}_I(S_r)\to M \to 0  \ \text{ and} \\
&0\to M \to H^{n-d+1}_{I+rS}(S)\to H^{n-d+1}_I(S)\to 0.
\end{align*}
If $K = S/\n$, these, respectively, induce the long exact sequences 
\begin{align}
\label{ExtLES1: e}  
&\cdots\to  \Ext^{i-1}_S(\kk,M)\to \Ext^i_S(\kk,H^{n-d}_I(S))\to 
\Ext^i_S(\kk,H^{n-d}_I(S_r))\to \cdots \\   
\label{ExtLES2: e}  
&\cdots \to \Ext^{i-1}_S(\kk,H^{n-d+1}_I(S)) \to \Ext^i_S(\kk, M) \to
\Ext^i_S(\kk, H^{n-d+1}_{I+rS}(S))\to \cdots.
\end{align}

We have that $\Ext^i_S(\kk,H^{n-d}_I(S_r))=0$ for all $i\geq 0$.
Additionally, since $\Supp_S H^{n-d+1}_I(S) \subseteq \{\n\}$, we have that 
\[ \InjDim_S H^{n-d+1}_I(S)\leq \dim \Supp_S H^{n-d+1}_I(S)+1 \leq 1 \]
\cite[Theorem 5.1]{Zhou}, so that $\Ext^i_S(\kk,H^{n-d+1}_I(S))=0$ for all $i\geq 2$.
Noting that $d>2$, these observations, along with \eqref{ExtLES1: e} and \eqref{ExtLES2: e}, imply that
\[
\Ext^{d}_S(\kk,H^{n-d}_I(S))\cong \Ext^{d-1}_S(\kk,M)
\cong \Ext^{d-1}_S(\kk,H^{n-d+1}_{I+rS}(S)).
\]
Taking $\kk$-vector space dimensions of the left- and right-hand sides, we conclude that \eqref{clm} holds, completing the proof that $\lambda_{d,d}(S/I) = \newLyu_{d,d}(S/I)$.

 The final statement now follows because $\lambda_{d,d}(S/I)$ equals the number of components of the Hochster-Huneke graph of the completion of the strict Henselization of $S/I$ \cite[Theorem 1.3]{LyuInvariants},\ \cite[Main Theorem]{W}. 
\end{proof}

%%%%%%%%%%%%%%%%%%%%%%

We point out that in light of Theorem \ref{highestLyuEqual: T}, it would be natural to investigate whether the highest Lyubeznik number agrees with the highest mixed characteristic Lyubeznik number more generally \cite[Question 6.13]{NuWiMixChar}.

The remainder of the paper is dedicated to establishing several properties of the highest mixed characteristic Lyubeznik number, motivated by analogous properties of their equal characteristic counterpart.  
In order to do so, we first study the structure of certain local cohomology modules in mixed characteristic. 

%%%%%%%%%%%%%%%%%%%%%%%%%%%%%%%%%%%%

\begin{remark} \label{surjectiveInjecitve: R}
 Suppose that $(S, \n)$ is an  unramified regular local ring of mixed characteristic $p>0$, and $M$ a local cohomology module of $S$ supported at $\n$.
If multiplication by $p$ is surjective on $M$, then $M$ must be an injective $S$-module \cite[Lemma 4.2]{NuWiMixChar}.

One way we apply this fact is the following:  Assume that $I$ is a CCI on the punctured spectrum of such a ring $S$, and $\dim(S/I) \geq 2$.
Then $H^{n-1}_I(S)$ is either zero or supported only at $\n$.  
If $A = S/pS$, then  $\dim(A/I A) \geq 1$, so that the short exact sequence 
\begin{equation} \label{SESTimesP: e}
0\to S\FDer{p} S\to A\to 0
\end{equation}
induces the long exact sequence 
\begin{equation} \label{usefulLES: e}
\cdots \to H^{n-2}_I(S)\FDer{p} H^{n-2}_I(S)\to H^{n-2}_I(A)\to H^{n-1}_I(S)\FDer{p} H^{n-1}_I(S)\to 0, 
\end{equation}
where the final zero is a consequence of the HLVT \cite[Theorem 3.1]{HartshorneCD}.  Thus, $H^{n-1}_I(S)$ is an injective $S$-module. 
\end{remark}

%%%%%%%%%%%%%%%%%%%%%%%%%%%%%%%%%%%%%

To prove the following structural theorem, we briefly call upon the theory of (left) modules over rings of differential operators.
We do not review details of this theory, and refer the reader to the reference \cite{Bj1}.
Given a subring $T$ of a ring $R$, $D(R,T)$ denotes the ring of $T$-linear differential operators on $R$.

%%%%%%%%%%%%%%%%%%%%%%%%%%%%%%%%%%%%%

\begin{lemma}\label{LCnMinus2InjectiveP: L}
Let $S$ be an $n$-dimensional complete unramified regular local ring of mixed characteristic $p>0$, with a separably closed residue field.
Let $I$ be a CCI ideal on $\Spec^\circ(S)$ that contains $p$, for which $\dim(S/I) \geq 3$.
Then $H^{n-2}_I(S)$ is an injective $S$-module. 
\end{lemma}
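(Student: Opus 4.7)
The plan is to apply Remark \ref{surjectiveInjecitve: R}: since $d := \dim(S/I) \geq 3$ and $I$ is a CCI on the punctured spectrum of $S$, we have $\Supp_S H^{n-2}_I(S) \subseteq \{\n\}$, so it suffices to show that multiplication by $p$ is surjective on $H^{n-2}_I(S)$. Associated to $0 \to S \FDer{p} S \to A \to 0$ is the long exact sequence
\[
\cdots \to H^{n-2}_I(S) \FDer{p} H^{n-2}_I(S) \to H^{n-2}_I(A) \FDer{\gamma} H^{n-1}_I(S) \FDer{p} H^{n-1}_I(S) \to 0,
\]
where the final zero comes from the HLVT applied to the equal-characteristic ring $A$. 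Thus the desired surjectivity is equivalent to injectivity of the connecting homomorphism $\gamma$.

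I would next identify the source and target of $\gamma$. Proposition \ref{connectedComponents: P} gives $H^{n-1}_I(S) \cong E_S(K)^{\oplus(t-1)}$, where $t$ is the number of connected components of $\Spec^\circ(S/I)$. Localizing the sequence above at any nonmaximal prime shows $H^{n-2}_I(A)$ is supported only at $\n A$, because both of its neighbors in the sequence vanish there. The ring $A$ is equal-characteristic regular local of dimension $n-1$ with separably closed residue field, $A/IA = S/I$ is equidimensional of dimension $d \geq 3$ (by Proposition \ref{CCIequidimensional: L}), and $\Spec^\circ(A/IA) = \Spec^\circ(S/I)$ still has $t$ connected components; the equal-characteristic analog of Proposition \ref{connectedComponents: P} (proved by the same argument, using the classical SVT \cite[Theorem 2.9]{H-L} in place of Theorem \ref{SVTUnramifiedGeneral: T}) then yields $H^{n-2}_I(A) \cong E_A(K)^{\oplus(t-1)}$. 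By exactness, the image of $\gamma$ equals the kernel of $p$ on $H^{n-1}_I(S)$, which is the $p$-torsion $E_A(K)^{\oplus(t-1)} \subset E_S(K)^{\oplus(t-1)}$. Thus $\gamma$ factors through a surjection $E_A(K)^{\oplus(t-1)} \twoheadrightarrow E_A(K)^{\oplus(t-1)}$.

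The main obstacle is to upgrade this surjection to an isomorphism; dimension counting alone will not suffice, since a surjective endomorphism of an Artinian module need not be injective (witness multiplication by $x$ on $E_{k[[x]]}(k)$). To overcome it, I would invoke the natural $D(S,V)$-module structure on local cohomology, where $V$ denotes the coefficient DVR of $S$. The map $\gamma$ is $D(S,V)$-linear; since $p$ annihilates $E_A(K)$ the action factors through $D(S,V)/pD(S,V)$. Using that $E_A(K)$ is simple as a module over this ring of differential operators, Schur's lemma identifies its endomorphism ring with a skew field, so any $D$-linear surjection $E_A(K)^{\oplus(t-1)} \twoheadrightarrow E_A(K)^{\oplus(t-1)}$ is represented by an invertible matrix over that skew field and is therefore an isomorphism. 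This forces $\gamma$ to be injective, $p$ to act surjectively on $H^{n-2}_I(S)$, and by Remark \ref{surjectiveInjecitve: R} the module $H^{n-2}_I(S)$ to be $S$-injective.
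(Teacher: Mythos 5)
Your proposal is correct and matches the paper's proof essentially step for step: both reduce the claim to showing that the connecting map $H^{n-2}_I(A)\to H^{n-1}_I(S)$ is injective, identify source and target via Proposition~\ref{connectedComponents: P} and its equal-characteristic analog (both modules being copies of $E_A(K)^{\oplus(t-1)}$), and then use the $D$-module structure and simplicity of $E_A(K)$ to upgrade the surjection onto the $p$-torsion to an isomorphism, whence $p$ acts surjectively on $H^{n-2}_I(S)$ and Remark~\ref{surjectiveInjecitve: R} applies. The only cosmetic difference is in that last upgrade: you invoke Schur's lemma and an invertible-matrix argument over the endomorphism skew field, whereas the paper simply compares finite lengths of $D(A,K)$-modules, two equivalent ways of exploiting the same simplicity.
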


\begin{proof}
 Let $\n$ denote the maximal ideal of $S$, let $K = S/\n$, and let $A = S/pS$.
If $\ctrOne$ is the number of connected components of the punctured spectrum of $S/I \cong A/IA$, 
it is well known to experts that $H^{n-2}_I(A) \cong E_A(K)^{\oplus \ctrOne - 1}$ as $A$-modules.
Since $S/I$ is equidimensional by Lemma \ref{CCIequidimensional: L}, we can also conclude that $H^{n-1}_I(S) \cong E_S(K)^{\oplus \ctrOne - 1}$ as $S$-modules by Proposition \ref{connectedComponents: P}.
Therefore, the kernel $N$ of multiplication by $p$ on $H^{n-1}_I(S)$
in \eqref{usefulLES: e} is isomorphic with $E_A(K)^{\oplus \ctrOne - 1}$.

Consider the induced surjection 
\begin{equation} \label{inducedMap: e}
H^{n-2}_I(A) \onto N \subseteq H^{n-1}_I(S).
\end{equation}
Identify $S\cong V \llbracket x_1,\ldots,x_{n-1} \rrbracket $, where $(V,pV,K)$ is a complete Noetherian DVR of mixed characteristic $p$ \cite{Cohen}.
If $D(S,V)$ is the ring of $V$-linear differential operators on $S$,  then \eqref{usefulLES: e} is a sequence of (left) $D(S,V)$-modules.
Moreover, the induced surjection \eqref{inducedMap: e} is a map of $D(A,K)$-modules \cite[(2.0.1)]{Todos}.  
Since $E_A(K)$ is a simple $D(A,K)$-module and $H^{n-2}_I(S) \cong N \cong E_A(K)^{\oplus \ctrOne -1}$, both $H^{n-2}_I(S)$ and $N$ have length $\ctrOne-1$ as $D(S,K)$-modules.
Since then the kernel of \eqref{inducedMap: e} must have length zero, this map must be an isomorphism.  

Looking back at \eqref{usefulLES: e}, we conclude that multiplication by $p$ on $H^{n-2}_I(S)$ is surjective.
Since $I$ is a CCI ideal on $\Spec^\circ(S)$, $H^{n-2}_I(S)$ either vanishes or is supported only at $\n$, and so it must be injective \cite[Lemma 4.2]{NuWiMixChar}. 
\end{proof}

%%%%%%%%%%%%%%%%%%%%%%%%%%%%%%%%%%%%

 The following lemma's aim is to remove the hypothesis in Lemma \ref{LCnMinus2InjectiveP: L} that the specified ideal must contain $p$.

\begin{proposition}\label{LCnMinus2Injective: P}
Let $S$ be an $n$-dimensional complete unramified regular local ring of mixed characteristic, with a separably closed residue field.
Let $I$ be a CCI ideal on $\Spec^\circ(S)$ for which 
$\dim(S/I)\geq 4$.
Then  $H^{n-2}_I(S)$ is an injective $S$-module.
\end{proposition}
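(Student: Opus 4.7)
The plan is to deduce this from Lemma~\ref{LCnMinus2InjectiveP: L} by replacing $I$ with the ideal $J := I + pS$. First I would record three basic observations: $p \in J$ by construction; $\dim(S/J) \geq \dim(S/I) - 1 \geq 3$, using Krull's principal ideal theorem together with the equidimensionality of $S/I$ from Proposition~\ref{CCIequidimensional: L}; and, crucially, $J$ is again a CCI ideal on $\Spec^\circ(S)$. Given all three, Lemma~\ref{LCnMinus2InjectiveP: L} applies to $J$ and yields that $H^{n-2}_J(S)$ is an injective $S$-module.

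The main technical step will be verifying the CCI property of $J$. The plan is to analyze the long exact sequence in local cohomology associated to the element $f = p$,
\[
\cdots \to H^{j-1}_I(S_p) \to H^j_J(S) \to H^j_I(S) \to H^j_I(S_p) \to \cdots.
\]
If $p$ lies in every minimal prime of $I$, then $\sqrt{J} = \sqrt{I}$, so $H^j_J(S) \cong H^j_I(S)$ for all $j$ and the CCI property for $J$ is inherited from $I$. Otherwise $\dim(S/J) = \dim(S/I) - 1$, so one must show $H^j_J(S)$ is supported at $\n$ for $j > n - \dim(S/J)$, i.e. for $j \geq n - \dim(S/I) + 2$. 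In that range both $j-1$ and $j$ strictly exceed $n - \dim(S/I)$, so $H^{j-1}_I(S)$ and $H^j_I(S)$ are supported only at $\n$; since $p \in \n$, inverting $p$ annihilates both, and the displayed sequence collapses to $H^j_J(S) \cong H^j_I(S)$, which has the required support.

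With the CCI property of $J$ secured and Lemma~\ref{LCnMinus2InjectiveP: L} giving the injectivity of $H^{n-2}_J(S)$, the final step is to transfer this back to $H^{n-2}_I(S)$. I would apply the same long exact sequence at $j = n-2$,
\[
H^{n-3}_I(S_p) \to H^{n-2}_J(S) \to H^{n-2}_I(S) \to H^{n-2}_I(S_p),
\]
and observe that the hypothesis $\dim(S/I) \geq 4$ forces both $n-3$ and $n-2$ to strictly exceed $n - \dim(S/I)$. Hence $H^{n-3}_I(S)$ and $H^{n-2}_I(S)$ are supported only at $\n$, so localizing at $p$ kills them; the sequence yields the isomorphism $H^{n-2}_J(S) \cong H^{n-2}_I(S)$, and the injectivity of the left-hand side passes to the right-hand side.

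The main obstacle I expect is the bookkeeping in verifying the CCI condition for $J$: one must handle uniformly the two regimes (dimension preserved when $p$ lies in all minimal primes of $I$, versus dimension dropping by one otherwise) and track the correct index threshold in each case. Everything else is a direct application of the long exact sequence together with the dimension bound $\dim(S/I)\geq 4$, which supplies the margin needed to kill $H^{n-3}_I(S_p)$ as well as $H^{n-2}_I(S_p)$.
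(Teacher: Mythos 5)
Your dichotomy on the minimal primes of $I$ is incomplete, and this creates a genuine gap. Since $S/I$ is equidimensional (Proposition~\ref{CCIequidimensional: L}), every minimal prime $\p$ of $I$ has $\dim(S/\p)=d$. There are then \emph{three} cases, not two: $p$ lies in every minimal prime (your first case, where $\sqrt{J}=\sqrt{I}$ and the CCI property transfers trivially); $p$ lies in no minimal prime (your ``otherwise,'' where indeed $\dim(S/J)=d-1$ and your long-exact-sequence argument is correct); and $p$ lies in some but not all minimal primes. In this third case $\dim(S/J)=d$ (any minimal prime of $I$ containing $p$ survives at full dimension $d$), so the threshold for the CCI condition on $J$ is $j>n-d$, not $j>n-d+1$. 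Your long exact sequence only gives $H^j_J(S)\cong H^j_I(S)$ for $j>n-d+1$; it says nothing useful at $j=n-d+1$, where the relevant piece is
\[
H^{n-d}_I(S_p)\to H^{n-d+1}_J(S)\to H^{n-d+1}_I(S)\to 0,
\]
and $H^{n-d}_I(S_p)$ need not vanish (indeed $H^{n-d}_I(S)$ is typically not supported only at $\n$). So the CCI property of $J=I+pS$ is not established in this case, and Lemma~\ref{LCnMinus2InjectiveP: L} cannot yet be invoked.

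This is precisely where the paper's proof does additional work: in the regime $\dim(S/(I+pS))=d$, it chooses a generic element $f$ lying in the intersection of the minimal primes of $H^{n-d+1}_{I+pS}(S)$ but outside every minimal prime of $I+pS$ of maximal dimension, so that inverting $f$ kills $H^{n-d+1}_{I+pS}(S)$. A second long exact sequence then shows $H^j_{I+(f,p)S}(S)\cong H^j_{I+pS}(S)\cong H^j_I(S)$ for all $j>n-d+1$, and $\dim\bigl(S/(I+(f,p)S)\bigr)=d-1\geq 3$, so $I+(f,p)S$ is a CCI ideal on $\Spec^\circ(S)$ containing $p$; Lemma~\ref{LCnMinus2InjectiveP: L} applies to it and the isomorphism at $j=n-2$ transfers injectivity back to $H^{n-2}_I(S)$. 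To fix your argument you need to incorporate this extra cut by $f$ (or an equivalent device) in the case where $p$ belongs to some but not all minimal primes of $I$; the rest of your outline is sound.
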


%%%%%%%%%%%%%%%%%%%%%%

\begin{proof}
Let $\n$ denote the maximal ideal of $S$, $p = \Char(S/\n)$, and $d = \dim(S/I)$.  
Since $p \in \n$ and  $\Supp_S( H^j_I(S)) \subseteq \{ \n \}$ for $j > n-d$, $H^j_I(S_p) \cong ( H^j_I(S) )_p = 0$.
Then from the long exact sequence \[ \cdots \to H^{j-1}_I(S_p)\to H^j_{I+pS}(S) \to H^j_I(S) \to H^j_I(S_p) \to \cdots,\] we see that
\begin{equation} \label{isomLCnoP: e}
H^j_{I+p S}(S)\cong H^j_I(S) \ \ \text{for all} \ \ j > n-d+1 = n - (d-1).
\end{equation}

The dimension of $S/(I+pS)$ is either $d-1$ or $d$.   First assume the former, that $\dim\( S/(I+pS)\) =d-1 \geq 3$.  
In this case, since $I$ is a CCI ideal on $\Spec^\circ(S)$,  \eqref{isomLCnoP: e} implies that  $I + pS$ is one as well.
Then $H^{n-2}_I(S) \cong H^{n-2}_{I+pS}(S)$ is injective by Lemma \ref{LCnMinus2InjectiveP: L}.

Alternatively, suppose that $\dim\( S/(I+pS)\) =d \geq 4$.
Let $J$ denote the intersection of the (finitely many) minimal primes of $H^{n-d+1}_{I+pS}(S)$ \cite[Theorem 1]{LyuUMC}.
 Then 
 \[
 \Supp_S ( H^{n-d+1}_{I+pS}(S) ) = \V(J) \subseteq \V(I+pS),
 \]
and
\[
\dim ( S/J ) = \dim\Supp_S ( H^{n-d+1}_{I+pS}(S) ) \leq n - (n-d+1)=  d-1.\]
Then by prime avoidance, there exists $f \in J$ not in any minimal prime $\q$ of $I+pS$ for which $\dim(S/\q)=d$.
By our choice of $J$,  $H^{n-d+1}_{I+pS}(S_f)\cong H^{n-d+1}_{I+pS}(S)_f=0$. 
Then noting \eqref{isomLCnoP: e} as well, we know that $H^j_{I+pS}(S_f)=0$ for all $j > n-d$, which means that 
\begin{equation} \label{isomLCnoPf: e}
H^j_{I+(f,p) S}(S) \cong H^j_{I+p S}(S) \ \ \text{for all} \ \  j > n-d+1
\end{equation}
by the long exact sequence 
\[
\cdots \to H^{j-1}_{I+pS}(S_f) \to H^j_{I+(f,p) S}(S)\to H^j_{I+p S}(S) \to H^j_{I+pS}(S_f) \to \cdots.
\]

By \eqref{isomLCnoP: e} and \eqref{isomLCnoPf: e}, $H^j_{I + (f,p)S}(S) \cong H^j_I(S)$ for all $j > n-d+1$, and since $\dim( S/(I+(f,p)S)) =d-1 \geq 3$ 
by our choice of $f$, $I+(f,p)S$ is a CCI ideal on $\Spec^\circ(S)$ since $I$ is one.
Then $H^{n-2}_I(S) \cong H^j_{I + (f,p)S}(S)$ is an injective $S$-module by Lemma \ref{LCnMinus2InjectiveP: L}.
\end{proof}

%%%%%%%%%%%%%%%%%%%%%%

If $S$ is an $n$-dimensional regular local ring containing a field, and $I$ is an ideal of $S$ for which $\dim(S/I) =d$, then the injective dimension of $H^{n-d}_I(S)$ is $d$ \cite[(4.4iii)]{LyuDMod}.
The same is true if $S$ is an $n$-dimensional complete unramified regular local ring of mixed characteristic \cite[Theorem 4.9]{NuWiMixChar}, and we call upon this fact in the proofs of the next two lemmas.  
The first relies on a spectral sequence argument, and is useful in proving Theorem \ref{highestLowestLyu: T}, a duality theorem on mixed characteristic Lyubeznik numbers.

%%%%%%%%%%%%%%%%%%%%%%

\begin{lemma}\label{iteratedLC: L}
Let $(S, \n, K)$ be an $n$-dimensional complete unramified regular local ring of mixed characteristic, and 
let $I$ be a CCI ideal on $\Spec^\circ(S)$ for which 
 $\dim(S/I)=d$ and $d \geq 2$.  
Then 
\begin{enumerate}
\item \label{iteratedLC 1: i} $H^i_\n ( H^{n-d}_I(S) ) \cong H^{n-d+i-1}_I(S)$ for all $i < d$,  
\item \label{iteratedLC 2: i} $H^d_\n ( H^{n-d}_I(S) ) \cong H^{n-1}_I(S)\oplus E_S(\kk)$, 
\end{enumerate}
and all other $H^i_\n ( H^d_I(S)) = 0$.
\end{lemma}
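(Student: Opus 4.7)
The plan is to run the Grothendieck composite-functor spectral sequence arising from $\Gamma_{\n} = \Gamma_{\n} \circ \Gamma_{I}$ (valid since $I \subseteq \n$),
\[
E_2^{p,q} \;=\; H^p_{\n}\bigl( H^q_I(S) \bigr) \;\Longrightarrow\; H^{p+q}_{\n}(S),
\]
whose abutment vanishes for $p + q < n$ and equals $E_S(\kk)$ when $p + q = n$, since $S$ is regular of dimension $n$.

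The key observation is that $E_2$ has very narrow support. Since $S/I$ is equidimensional of dimension $d$ (Proposition \ref{CCIequidimensional: L}) and $S$ is Cohen--Macaulay, $\operatorname{grade}(I,S) = n-d$, so $H^q_I(S) = 0$ for $q < n-d$; and for $q > n-d$ the CCI hypothesis forces $\Supp_S H^q_I(S) \subseteq \{\n\}$, so any such module is $\n$-torsion. Consequently $H^p_{\n}(H^q_I(S)) = 0$ whenever $q > n-d$ and $p \geq 1$, while $H^0_{\n}(H^q_I(S)) = H^q_I(S)$ in that range. Thus the nonzero locus of $E_2$ consists of the row $q = n-d$ together with the column $p = 0,\; n-d \leq q \leq n$.

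Against this support description, the only differentials $d_r \colon E_r^{p,q} \to E_r^{p+r, q-r+1}$ that have a chance of being nonzero are those of the form
\[
d_r \colon H^{n-d+r-1}_I(S) \longrightarrow H^r_{\n}\bigl(H^{n-d}_I(S)\bigr), \qquad r \geq 2,
\]
and both endpoints survive unchanged from $E_2$ up to step $r$. Invoking convergence: for $2 \leq i \leq d-1$, both $E_\infty^{0, n-d+i-1}$ and $E_\infty^{i, n-d}$ have total degree strictly less than $n$ and must vanish, so $d_i$ is an isomorphism and part \eqref{iteratedLC 1: i} follows. The entries $E_2^{i, n-d}$ for $i > d$ are untouched by any differential and vanish by the same degree consideration (using $H^n_I(S) = 0$ by the HLVT to eliminate the only remaining potential source $(0,n)$), covering the ``all other'' clause.

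The main obstacle is part \eqref{iteratedLC 2: i}. On the diagonal $p + q = n$ the only potentially nonzero $E_\infty$ terms are $E_\infty^{0, n}$ and $E_\infty^{d, n-d}$; the former already vanishes on $E_2$ since $H^n_I(S) = 0$ by the HLVT \cite[Theorem 3.1]{HartshorneCD}, so $E_\infty^{d, n-d} \cong E_S(\kk)$. Unpacking this through $d_d$ produces the short exact sequence
\[
0 \longrightarrow H^{n-1}_I(S) \longrightarrow H^d_{\n}\bigl(H^{n-d}_I(S)\bigr) \longrightarrow E_S(\kk) \longrightarrow 0.
\]
To finish I would split this sequence using input external to the spectral sequence: invoking Proposition \ref{connectedComponents: P} identifies $H^{n-1}_I(S)$ with a finite direct sum of copies of $E_S(\kk)$, which is injective as an $S$-module, so the sequence splits automatically and (2) follows.
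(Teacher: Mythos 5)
Your spectral sequence computation is correct and coincides with the paper's own argument (same composite-functor spectral sequence, same identification of the nonzero row and column, same conclusion that the differentials $\partial_r$ are isomorphisms for $r < d$, same short exact sequence
\[
0 \longrightarrow H^{n-1}_I(S) \longrightarrow H^d_{\n}\bigl(H^{n-d}_I(S)\bigr) \longrightarrow E_S(\kk) \longrightarrow 0
\]
coming from the single differential into total degree $n$). The genuine gap is in the final step, where you split this sequence by citing Proposition~\ref{connectedComponents: P} to deduce that $H^{n-1}_I(S)$ is a direct sum of copies of $E_S(\kk)$. That proposition imports two hypotheses the lemma does not assume: it requires the residue field $\kk$ to be separably closed, and it requires $\dim(S/\p) \geq 3$ for every minimal prime $\p$ of $I$ --- which, since $S/I$ is equidimensional, amounts to $d \geq 3$. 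The lemma as stated only assumes $d \geq 2$ and makes no assumption on $\kk$, so your argument proves a strictly weaker statement. The paper avoids both hypotheses by instead quoting Remark~\ref{surjectiveInjecitve: R}: multiplication by $p$ is shown to be surjective on $H^{n-1}_I(S)$ via the long exact sequence coming from $0 \to S \stackrel{p}{\to} S \to S/pS \to 0$ (the final term $H^{n-1}_{I(S/pS)}(S/pS)$ vanishes by HLVT because $\dim\bigl((S/pS)/I(S/pS)\bigr)\ge 1$), and an $\n$-torsion module on which $p$ acts surjectively is automatically injective by \cite[Lemma~4.2]{NuWiMixChar}. That route needs only $d \geq 2$ and no separable closure, so it matches the stated hypotheses. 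To repair your proof without changing its structure, replace the appeal to Proposition~\ref{connectedComponents: P} with this injectivity argument.
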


%%%%%%%%%%%%%%%%%%%%%%

\begin{proof}
Consider the spectral sequence $E_2^{i,j} = H^i_\n ( H^j_I(S) ) \underset{i}{\implies} H^{i+j}_\n (S)$ \cite[Proposition 1.4]{HartshorneLC}.  
By our assumptions on $I$, the HLVT \cite[Theorem 3.1]{HartshorneCD}, and the fact that $\InjDim_S H^{n-d}_I(S) = d$ \cite[Theorem 4.9]{NuWiMixChar},  all $E_2^{i,j} = 0$ except possibly: 
\begin{itemize}
\item $E_2^{0, j} = H^0_\n(H^j_I(S)) = H^j_I(S)$, where  $n - d + 1 \leq j \leq n-1$, and 
\item $E_2^{i, n-d} = H^i_\n(H^{n-d}_I(S))$, where $0 \leq i \leq d$. 
\end{itemize}
The $E_2$-sheet is concentrated in row $n-d$ and column zero.  
This means that the only possibly nonzero differentials are, for $r \geq 2$, \[\partial_r^{0,n-d+r-1}: E_2^{0, n-d+r-1} \to E_2^{r, n-d}. \] 
Since $H^\ctrOne_\m(S) \neq 0$ if and only if $\ctrOne = n$, each $\partial_r^{0,n-d+r-1}$ must be an isomorphism for $r \neq d$, and
$  H^{n-d+r-1}_I(S) = H^0_\n (H^{n-d+r-1}_I(S)) \cong  H^r_\n (H^{n-d}_I(S))$, establishing \eqref{iteratedLC 1: i}.
We also know that $\partial_d^{0,n-1}: E_d^{0,n-1} = H^{n-1}_I(S) \to E_d^{d,n-d} = H^d_\n (H^{n-d}_I(S))$ must be injective.
Moreover, since $H^n_\n(S) \cong E_S(\kk)$, if $\partial = \partial_d^{0, n-1}$, there is a short exact sequence
\begin{equation*} 
0 \to H^{n-1}_I(S) \overset{\partial}{\to} H^d_\n (H^{n-d}_I(S) ) \to E_S(\kk) \to 0. \label{shortexactspectral: e}.
\end{equation*}
By Remark \ref{surjectiveInjecitve: R}, $H^{n-1}_I(S)$ is injective, so this sequence splits, and \eqref{iteratedLC 2: i} holds.
\end{proof}

%%%%%%%%%%%%%%%%%%%%%%

The following result helps us understand the structure of a minimal injective resolution of certain local cohomology modules in mixed characteristic.  

\begin{lemma} \label{InjResn: L}
Take an ideal $I$ of an $n$-dimensional complete unramified regular local ring  $S$ of mixed characteristic, and let $d = \dim(S/I)$. Since $\InjDim H^{n-d}_I(S) = d$  \cite[Theorem 4.9]{NuWiMixChar}, a minimal injective resolution of $H^{n-d}_I(S)$ has the form \[ E^\bullet = (E^0 \to E^1 \to \cdots \to E^d \to 0 ). \]
Then $E^d$ is supported only at the maximal ideal of $S$.
\end{lemma}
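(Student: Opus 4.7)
The plan is to show that for every prime ideal $\p$ of $S$ with $\p \neq \n$, the Bass number $\mu_d(\p, H^{n-d}_I(S))$ vanishes; this is equivalent to the claim, since the $d^{\text{th}}$ term of a minimal injective resolution decomposes as $E^d \cong \bigoplus_\p E_S(S/\p)^{\mu_d(\p, H^{n-d}_I(S))}$. Because Bass numbers commute with localization and with completion at a prime, and because local cohomology commutes with flat base change,
\[ \mu_d(\p, H^{n-d}_I(S)) = \mu_d\(\p \widehat{S_\p}, H^{n-d}_{I\widehat{S_\p}}(\widehat{S_\p})\), \]
where $\widehat{S_\p}$ denotes the completion of $S_\p$ at its maximal ideal. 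If $I \not\subseteq \p$, the target module is zero and the Bass number vanishes trivially; otherwise, set $s=\dim(S/\p)\geq 1$, so that $\dim\widehat{S_\p}=n-s$, and by catenarity $\dim\widehat{S_\p}/I\widehat{S_\p}\leq d-s$, with equality precisely when $\p$ contains a minimal prime of $I$ of height $n-d$.

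I would then split into cases depending on whether $p\in\p$. When $p\notin\p$, the ring $\widehat{S_\p}$ contains $\QQ$, and Lyubeznik's injective dimension bound for regular local rings containing a field gives
\[ \InjDim_{\widehat{S_\p}} H^{n-d}_{I\widehat{S_\p}}(\widehat{S_\p}) \leq \dim\widehat{S_\p}/I\widehat{S_\p} \leq d-s<d, \]
so the Bass number vanishes. When $p\in\p$, I would first verify that $\widehat{S_\p}$ is itself a complete unramified regular local ring of mixed characteristic: since $S/pS$ is regular of dimension $n-1$, the localization $S_\p/pS_\p$ is regular of dimension $\dim(S_\p)-1$, which forces $p\notin(\p S_\p)^2$. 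If no minimal prime of $I$ contained in $\p$ has height $n-d$, then $\Ht(I\widehat{S_\p})>n-d$, and $H^{n-d}_{I\widehat{S_\p}}(\widehat{S_\p})=0$ by vanishing of local cohomology below the grade on the Cohen--Macaulay ring $\widehat{S_\p}$. Otherwise, $\dim\widehat{S_\p}/I\widehat{S_\p}=d-s$, and \cite[Theorem 4.9]{NuWiMixChar} applied inside $\widehat{S_\p}$ yields
\[ \InjDim_{\widehat{S_\p}} H^{n-d}_{I\widehat{S_\p}}(\widehat{S_\p}) = d-s<d. \]

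The main obstacle I anticipate is the verification that, when $\p$ is a non-maximal prime of $S$ containing $p$, the completion $\widehat{S_\p}$ is again an unramified regular local ring of mixed characteristic, so that \cite[Theorem 4.9]{NuWiMixChar} legitimately applies there; once the regularity of $S_\p/pS_\p$ is observed, the rest of the argument is routine bookkeeping with catenarity, flat base change, and the two cited injective-dimension bounds in equal and mixed characteristic.
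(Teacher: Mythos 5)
Your proof is correct and follows the same approach as the paper: localize at a non-maximal prime $\p\supseteq I$ and bound the injective dimension of the localized module by its support dimension, which is strictly less than $d$. The paper's version is terser, simply asserting $\InjDim H^{n-d}_I(S_\q)\leq\dim(S_\q/IS_\q)\leq d-1$ for $I\subseteq\q\subsetneq\m$, whereas you usefully unpack why this bound holds by splitting on whether $p\in\p$, passing to the completion so that the cited mixed-characteristic injective-dimension result applies verbatim, and separately handling the case where the localized local cohomology module vanishes.
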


\begin{proof}
For a prime ideal $\q$ such that $I \subseteq \q \subsetneq \m$,
\[ \InjDim H^{n-d}_I(S_\q) \leq \dim(S_\q / I S_\q) \leq d - 1. \]
Therefore, since 
\[
E^\bullet_\q = \((E^0)_\q \to (E^1)_\q \to \ldots \to (E^d)_\q \to 0 \)
\]
 is a minimal injective resolution of $H^{n-d}_I(S_\q)$,
we have that $\(E^d\)_\q=0$. 
\end{proof}

%%%%%%%%%%%%%%%%%%%%%%

We take advantage of the following duality theorem to relate the highest mixed characteristic Lyubeznik number to the number of connected components of the punctured spectrum.
See \cite[Theorem]{GarciaSabbah}. and \cite[Theorem 1.2]{B-B} for similar statements about Lyubeznik numbers in equal characteristic.

%%%%%%%%%%%%%%%%%%%%%%

\begin{theorem} \label{highestLowestLyu: T}
Let $I$ be a CCI ideal on the punctured spectrum of a complete unramified regular local ring $S$ of mixed characteristic, with a separably closed residue field.  If $d:=\dim(S/I)\geq 4$, then \[\newLyu_{d,d}(S/I)=\newLyu_{0,1}(S/I) + 1.\]
\end{theorem}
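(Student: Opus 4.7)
\emph{Proof plan.} The plan is to establish $\widetilde{\lambda}_{0,1}(S/I) = t-1$ and $\widetilde{\lambda}_{d,d}(S/I) = t$ separately, where $t$ denotes the number of connected components of $\Spec^\circ(S/I)$, and then combine them. For the first equality I would note that Proposition \ref{CCIequidimensional: L} makes $S/I$ equidimensional, so every minimal prime $\p$ of $I$ satisfies $\dim(S/\p) = d \geq 4$; Proposition \ref{connectedComponents: P} then applies and yields $H^{n-1}_I(S) \cong E_S(K)^{\oplus t-1}$, hence $\widetilde{\lambda}_{0,1}(S/I) = t-1$.

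For the second equality I would set $M = H^{n-d}_I(S)$ and use the Grothendieck composition-of-functors spectral sequence for the identity $\Hom_S(K,-) = \Hom_S(K, \Gamma_\n(-))$,
\[
E_2^{p,q} = \Ext^p_S(K, H^q_\n(M)) \Longrightarrow \Ext^{p+q}_S(K, M),
\]
and show that only the corner $E_2^{0,d}$ survives on the diagonal $p+q=d$. Lemma \ref{iteratedLC: L}(2), combined with the previous paragraph, will give $H^d_\n(M) \cong H^{n-1}_I(S) \oplus E_S(K) \cong E_S(K)^{\oplus t}$, so $E_2^{0,d} \cong K^{\oplus t}$. To dispatch the other diagonal entries I would invoke Lemma \ref{iteratedLC: L}(1), which identifies $H^i_\n(M)$ with $H^{n-d+i-1}_I(S)$ in the range $i<d$: the extreme values $H^0_\n(M) = H^1_\n(M) = 0$ kill $E_2^{d,0}$ and $E_2^{d-1,1}$; the term $H^{d-1}_\n(M) \cong H^{n-2}_I(S)$ is injective by Proposition \ref{LCnMinus2Injective: P} (which crucially requires $d \geq 4$), killing $E_2^{1,d-1}$; and for $2 \leq p \leq d-2$, each $H^{d-p}_\n(M) \cong H^{n-p-1}_I(S)$ is supported only at $\n$ because $I$ is CCI on the punctured spectrum, so the Zhou bound from Remark \ref{ZhouProofInjective: R} forces injective dimension at most $1$, whence $E_2^{p,d-p}$ vanishes since $p \geq 2$.

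It would then remain to verify that the differentials into and out of the surviving corner $(0,d)$ are zero: incoming ones are trivial (negative column), and each outgoing $\partial_r^{0,d}: E_r^{0,d} \to E_r^{r,d-r+1}$ with $r \geq 2$ lands in a position whose $E_2$-value $\Ext^r_S(K, H^{d-r+1}_\n(M))$ is killed by the very same injective-dimension analysis. This will force $E_\infty^{0,d} = K^{\oplus t}$ to be the only surviving term on diagonal $d$, giving $\Ext^d_S(K,M) \cong K^{\oplus t}$ and concluding $\widetilde{\lambda}_{d,d}(S/I) = t = \widetilde{\lambda}_{0,1}(S/I) + 1$. The main obstacle is the uniform vanishing of the non-corner $E_2$-entries on diagonal $d$, which demands a careful orchestration of Lemma \ref{iteratedLC: L}, Proposition \ref{LCnMinus2Injective: P}, and Zhou's injective-dimension inequality together; the hypothesis $d \geq 4$ becomes essential precisely because Proposition \ref{LCnMinus2Injective: P} is what handles the otherwise problematic term $E_2^{1,d-1}$.
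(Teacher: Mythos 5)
Your route is a valid and genuinely different take on the theorem: instead of the paper's hands-on analysis of $\Gamma_\n$ applied to a minimal injective resolution $E^\bullet$ of $M = H^{n-d}_I(S)$ (where the key step is showing $\varphi_{d-1}=0$, so that $\Ext^d_S(K,M) \cong \Hom_S(K,E^d)$ and $E^d \cong H^d_\n(M)$), you run the Grothendieck composition spectral sequence $\Ext^p_S(K,H^q_\n(M)) \Rightarrow \Ext^{p+q}_S(K,M)$ and argue degeneration at the corner $(0,d)$. You also route $\newLyu_{0,1}$ through Proposition \ref{connectedComponents: P}, which the paper only uses later in Theorem \ref{connectedComponentsLyu: T}; that is fine and not circular, and the hypothesis on minimal primes needed there is correctly supplied via equidimensionality (Proposition \ref{CCIequidimensional: L}). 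Both approaches ultimately rest on the same ingredients: Lemma \ref{iteratedLC: L}, Proposition \ref{LCnMinus2Injective: P}, and Zhou's injective-dimension bound.

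There is, however, one genuine gap. You dispose of $E_2^{d-1,1}=\Ext^{d-1}_S(K,H^1_\n(M))$ (and the outgoing target $E_2^{d,1}$) by asserting $H^1_\n(M)=0$ and citing Lemma \ref{iteratedLC: L}(1). But the identification in that lemma, read at $i=1$, says $H^1_\n(M)\cong H^{n-d+1-1}_I(S)=H^{n-d}_I(S)=M$, which is nonzero; and since $\InjDim_S M = d$, the group $\Ext^{d-1}_S(K,M)$ has no reason to vanish, so your degeneration argument would stall there. The claim $H^1_\n(M)=0$ is in fact correct, but the lemma's identification only genuinely applies in the range $2\leq i<d$ (it is produced by the differentials $\partial_r$, $r\geq 2$, of the spectral sequence $H^i_\n(H^j_I(S))\Rightarrow H^{i+j}_\n(S)$). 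For $i=0,1$ one must argue directly from the abutment: the entries $E_2^{0,n-d}$ and $E_2^{1,n-d}$ admit no nonzero incoming or outgoing differentials, hence equal their $E_\infty$ values, which vanish because $H^{n-d}_\n(S)=H^{n-d+1}_\n(S)=0$ once $d\geq 2$. You need to insert that short argument explicitly; with it in place the rest of your plan goes through.
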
 

%%%%%%%%%%%%%%%%%%%%%%

\begin{proof}
 Let $n = \dim(S)$, let $\n$ denote the maximal ideal of $S$, and let $K = S/\n$.
If $E^\bullet$ is a minimal injective resolution of $H^{n-d}_I(S)$, then $E^i = 0$ for $i > d$  \cite[(4.4iii)]{LyuDMod}.
Consider the complex
\begin{equation} \label{computeHH: e}
\cdots \to H^0_\n(E^{d-2}) \overset{\varphi_{d-2}}{\to} H^0_\n(E^{d-1}) \overset{\varphi_{d-1}}{\to} H^0_\n(E^d) \to  0,
\end{equation}
whose $i^\text{th}$ cohomology module is $H^i_\n(H^{n-d}_I(S))$.
Let $M$ denote $\IM(\psi)$.

Noting Remark \ref{ZhouProofInjective: R},  $M$ is an injective $S$-module  \cite[Theorem 5.1]{Zhou}).
Therefore, $H^0_\n(E^{d-1}) \cong M \oplus N$, where $N \cong E_S(\kk)^{\oplus \ctrOne}$ for some $\ctrOne \geq 0$ since $H^0_\n(E^{d-1})$ is an injective $S$-module supported at $\n$.  
Making this identification in \eqref{computeHH: e},  \[H^{d-1}_\n H^{n-d}_I(S) \cong \Ker(\varphi_{d-1})/M \cong \Ker(\varphi_{d-1}|_N) \subseteq N \cong E_S(K)^{\oplus \ctrOne},\]
so that for some $S$-module $P$,
\[N \cong E_S(K)^{\oplus \ctrOne} \cong H^{d-1}_\n H^{n-d+1}_I(S) \oplus P,\]
and under this further identification in \eqref{computeHH: e}, $\Ker(\varphi_{d-1}|_P) = 0$.

By Lemma \ref{iteratedLC: L}, $H^{d-1}_\n (H^{n-d}_I(S))\cong H^{n-2}_I(S)$, and is then injective by Proposition \ref{LCnMinus2Injective: P}; 
this means that $P \cong  E_S(K)^{\oplus \ctrTwo}$ for some $0 \leq \ctrTwo \leq \ctrOne$.
Then since $E^\bullet$ is a minimal resolution, $P \cap M = 0$, and $\Ker(\varphi_{d-1}|_P) = 0$, we must have that $\ctrTwo=0$, and $P=0$.
This implies that $\varphi_{d-1} = 0$, so that $H^d_\n(H^{n-d}_I(S)) \cong H^0_\n(E^d)$, which equals $E^d$ by Lemma \ref{InjResn: L}.
Then by Lemma \ref{iteratedLC: L}, $E^d  \cong H^{n-1}_I(S) \oplus E_S(K)$.  
As $H^{n-1}_I(S)$ is injective by Remark \ref{surjectiveInjecitve: R}, and supported only at $\n$ by our assumptions on $I$, 
$\newLyu_{0,1}(S/I) = \dim_K \Hom_S(K, H^{n-1}_I(S))$. 
Now, we can finally conclude that 
\[
E_S(\kk)^{\oplus \newLyu_{d,d}(S/I) } \cong E^d \cong E_S(\kk)^{\oplus \newLyu_{0,1}(S/I)+1},
\]
so that  $\newLyu_{d,d}(S/I) = \newLyu_{0,1}(S/I)+1$.
\end{proof}

%%%%%%%%%%%%%%%%%%%%%%

The following result relates the highest mixed characteristic Lyubeznik number to the number of components of the punctured spectrum. 
 Note that Theorem \ref{connectedComponentsLyuIntro: T} from the introduction is a straightforward consequence, after writing the ring $R$ in its hypothesis as the quotient 
of a complete unramified regular local ring $S$ of mixed characteristic by a cohomologically complete intersection ideal $I$ on its punctured spectrum. 

An analogous statement for the Lyubeznik numbers is known for rings containing a field, applying Poincar\'e duality in characteristic zero \cite{GarciaSabbah}, and spectral sequences in positive characteristic \cite{B-B,BlickleCCI}.

\begin{theorem}[cf.\ Theorem \ref{connectedComponentsLyuIntro: T}] \label{connectedComponentsLyu: T}
Let $S$ be a complete unramified regular local ring of mixed characteristic, with a separably closed residue field.  
Suppose that $I$ is a CCI ideal on $\Spec^\circ(S)$ for which $d:= \dim(S/I) \geq 4$.
Then $\Spec^\circ(S/I)$ has $\newLyu_{d,d}(S/I)$ connected components.
\end{theorem}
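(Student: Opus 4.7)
The plan is to combine Theorem \ref{highestLowestLyu: T} with Proposition \ref{connectedComponents: P} in a single short argument, with Proposition \ref{CCIequidimensional: L} serving to verify the hypotheses.

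First, I would establish the dimension hypothesis needed to invoke Proposition \ref{connectedComponents: P}. Since $I$ is a CCI ideal on $\Spec^\circ(S)$, Proposition \ref{CCIequidimensional: L} gives that $S/I$ is equidimensional, so every minimal prime $\p$ of $I$ satisfies $\dim(S/\p) = d \geq 4 \geq 3$. Let $t$ denote the number of connected components of $\Spec^\circ(S/I)$. Then Proposition \ref{connectedComponents: P} applies and yields
\[
\newLyu_{0,1}(S/I) = t - 1.
\]

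Next, since $d \geq 4$, the hypotheses of Theorem \ref{highestLowestLyu: T} are satisfied, and
\[
\newLyu_{d,d}(S/I) = \newLyu_{0,1}(S/I) + 1.
\]
Combining the two displayed equalities immediately gives $\newLyu_{d,d}(S/I) = t$, which is what we want.

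No step here is a genuine obstacle; all the hard work has been done in Theorem \ref{highestLowestLyu: T} (the duality $\newLyu_{d,d} = \newLyu_{0,1}+1$, which required the spectral sequence analysis of Lemma \ref{iteratedLC: L} together with the injectivity results from Proposition \ref{LCnMinus2Injective: P} and Remark \ref{surjectiveInjecitve: R}) and in Proposition \ref{connectedComponents: P} (which rests on the new SVT, Theorem \ref{SVTUnramifiedGeneral: T}). The only thing to be careful about is confirming that the dimension bound $d \geq 4$ in the hypothesis indeed suffices to feed into both Proposition \ref{connectedComponents: P} (which needs $\dim(S/\p) \geq 3$ for every minimal prime) and Theorem \ref{highestLowestLyu: T} (which needs $d \geq 4$). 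Equidimensionality of $S/I$ coming from Proposition \ref{CCIequidimensional: L} is what bridges these two hypotheses cleanly.
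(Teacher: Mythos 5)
Your proof is correct and follows essentially the same route as the paper's: the authors also cite Proposition \ref{CCIequidimensional: L} to get equidimensionality and then combine Proposition \ref{connectedComponents: P} with Theorem \ref{highestLowestLyu: T}, exactly as you do. You have simply spelled out the bookkeeping (checking $\dim(S/\p)\geq 3$ and matching $t-1$ with $\newLyu_{0,1}$) that the paper leaves implicit.
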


\begin{proof}
Since $S/I$ is equidimensional by Lemma \ref{CCIequidimensional: L}, 
the statement follows immediately from Proposition \ref{connectedComponents: P} and Theorem \ref{highestLowestLyu: T}.
\end{proof}

%%%%%%%%%%%%%%%%%%%%%%

%%%%%%%%%%%%%%%%%%%%%%%%%%%%%%%%%%

Our final result again relates the highest mixed characteristic Lyubeznik number to connectedness properties of the spectrum of the ring.
It says that if this number is as small as possible, then the spectrum of the ring must have high connectedness dimension.
The analogous statement in equal characteristic holds:  
If the highest Lyubeznik number of a complete, equidimensional local ring containing a field equals one, then the  
 Hochster-Huneke graph of the completion of the strict Henselization of the ring is connected, 
 so that the Hochster-Huneke graph of the ring itself is also connected (see Remark \ref{Rem HHgraph}).
 Thus, the ring has connectedness dimension at least one less than its dimension \cite[Theorem 3.6]{HHgraph}.

%%%%%%%%%%%%%%%%%%%%%%%%%%%%%

\begin{theorem}\label{connectednessDimensionLyu: T}
Let $S$ be a complete unramified regular local ring of mixed characteristic, and let $I$ be an ideal of $S$ for which $R=S/I$ is equidimensional and has dimension $d\geq 2$.
If $\newLyu_{d,d}(R)=1$, then $c(R) \geq d - 1$; i.e., for every closed subset $Z$ of $\Spec(R)$ of dimension at most $d-2$, $\Spec(R) \setminus Z$ is connected.
\end{theorem}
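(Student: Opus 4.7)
My plan is to prove the contrapositive: assuming there is a closed subset $Z \subseteq \Spec(R)$ with $\dim Z \leq d-2$ for which $\Spec(R) \setminus Z = U_1 \sqcup U_2$ is a disjoint union of nonempty open subsets, I aim to show $\newLyu_{d,d}(R) \geq 2$. Replacing $I$ by $\sqrt{I}$ (which preserves $H^{n-d}_I(S)$, hence $\newLyu_{d,d}$), I first set up the necessary ideals. Since $R$ is equidimensional of dimension $d > \dim Z$, no irreducible component $\V(\p)$ of $\Spec(R)$ lies in $Z$; each such component meets $\Spec(R) \setminus Z$ in a nonempty, open dense, irreducible subset, which must lie entirely in $U_1$ or entirely in $U_2$. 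This partitions the minimal primes of $I$ into two nonempty classes, and I let $\fa,\fb$ be the intersections of the primes in each class, viewed as ideals of $S$. By construction, $\sqrt{\fa \cap \fb} = \sqrt{I}$, $\dim(S/\fa) = \dim(S/\fb) = d$ (since each contains some minimal prime $\p$ of $I$ with $\dim(S/\p) = d$), and $\V(\fa+\fb) \subseteq \overline{U_1} \cap \overline{U_2} \subseteq Z$, so (with $n = \dim S$) $\Ht(\fa+\fb) \geq n-d+2$.

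With these in hand, I invoke the Mayer--Vietoris sequence in local cohomology for $\fa$ and $\fb$. Since $S$ is regular and $\Ht(\fa+\fb) \geq n-d+2$, we have $H^j_{\fa+\fb}(S) = 0$ for $j \leq n-d+1$, so the sequence collapses at degree $n-d$ into an isomorphism $H^{n-d}_\fa(S) \oplus H^{n-d}_\fb(S) \FDer{\sim} H^{n-d}_{\fa \cap \fb}(S) = H^{n-d}_I(S)$. Applying $\Ext^d_S(K,-)$ and taking $K$-dimensions yields $\newLyu_{d,d}(R) = \dim_K \Ext^d_S(K, H^{n-d}_\fa(S)) + \dim_K \Ext^d_S(K, H^{n-d}_\fb(S))$, so it suffices to show each summand is positive. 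Since $\dim(S/\fa) = d$, the injective dimension bound \cite[Theorem 4.9]{NuWiMixChar} gives $\InjDim_S H^{n-d}_\fa(S) = d$, so the top term $E^d$ of a minimal injective resolution of $H^{n-d}_\fa(S)$ is nonzero; by Lemma \ref{InjResn: L}, $E^d$ is supported only at $\n$ and thus contains at least one $E_S(K)$ summand, forcing $\Ext^d_S(K, H^{n-d}_\fa(S)) \neq 0$. The same argument applies to $\fb$, giving $\newLyu_{d,d}(R) \geq 2$ and the desired contradiction.

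The principal obstacle lies in the first, geometric step: producing a partition of the minimal primes of $I$ that simultaneously yields $\sqrt{\fa \cap \fb} = \sqrt{I}$, keeps $\dim(S/\fa) = \dim(S/\fb) = d$, and forces $\V(\fa+\fb) \subseteq Z$. This relies crucially on the equidimensionality of $R$ together with the codimension bound $\dim Z \leq d-2$ to guarantee that each irreducible component of $\Spec(R)$ is entirely engulfed by one side of the decomposition. Once these three properties are in place, the Mayer--Vietoris collapse and the Bass-number lower bound coming from \cite[Theorem 4.9]{NuWiMixChar} together with Lemma \ref{InjResn: L} make the remainder of the argument routine.
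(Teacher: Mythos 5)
Your proposal is correct and follows essentially the same strategy as the paper's proof: a Mayer--Vietoris argument showing that a disconnection of $\Spec(R)\setminus Z$ with $\dim Z \le d-2$ forces $H^{n-d}_I(S)$ to split as a direct sum, whence $\newLyu_{d,d}(R)$ decomposes as a sum of two positive highest mixed characteristic Lyubeznik numbers. The only cosmetic differences are that you construct the decomposition ideals $\fa,\fb$ explicitly by partitioning the minimal primes of $I$ (the paper simply invokes the algebraic characterization of disconnectedness from Section~\ref{Connectedness: SS}), and you re-derive the positivity of each summand from \cite[Theorem 4.9]{NuWiMixChar} and Lemma~\ref{InjResn: L} rather than citing \cite[Proposition 3.12]{NuWiMixChar} directly.
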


%%%%%%%%%%%%%%%%%%%%%%%%%%%%%

\begin{proof}
By way of contradiction, suppose that there exists a closed subset $Z$ of $\Spec(R)$ for which $\Spec(R) \setminus  Z$ is disconnected and $\dim(Z) \leq d-2$.
Fix an ideal $\fa$ of $S$ for which $Z=\V(\fa R)$ in $\Spec(R)$.
Then there exist ideals $J_1$ and $J_2$, such that
$\sqrt{J_1\cap J_2}=\sqrt{I}$ and $\sqrt{J_1+J_2}  = \sqrt{\ideala}$.
Since $S/I$ is equidimensional, $\dim(S/J_1) = \dim(S/J_2) = d$.

Consider the Mayer-Vietoris sequence in local cohomology associated to $J_1$ and $J_2$:
\[
0\to H^{n-d}_{J_1}(S)\oplus H^{n-d}_{J_2}(S)\to H^{n-d}_{I}(S)\to H^{n-d+1}_{J_1+J_2}(S)\to \cdots.
\]
Since $\dim\(S/(J_1+J_2)\) \leq d - 2$, 
$H^{n-d+1}_{J_1+J_2}(S)=0$, so that 
$
H^{n-d}_{I}(S)\cong H^{n-d}_{J_1}(S)\oplus H^{n-d}_{J_2}(S)
$, and if $K$ is the residue field of $S$, then
\[\Ext^{d}_S(K,H^{n-d}_{I}(S))\cong \Ext^{d}_S(K,H^{n-d}_{J_1}(S))\oplus \Ext^{d}_S(K,H^{n-d}_{J_2}(S)).
\]
Taking $K$-vector space dimensions, we have that $\newLyu_{d,d}(S/I)=\newLyu_{d,d}(S/J_1)+\newLyu_{d,d}(S/J_2)$.  
However, the highest mixed characteristic Lyubeznik number is positive \cite[Proposition 3.12]{NuWiMixChar}, 
$\newLyu_{d,d}(S/J_1)+\newLyu_{d,d}(S/J_2)\geq 2$, a contradiction.  
\end{proof}

%%%%%%%%%%%%%%%%%%%%%

\section*{Acknowledgments}
We would like to thank Alessandro De Stefani for conversations about rings with isolated singularities.  
We are also grateful to the National Science Foundation (NSF): 
the first author has been partially supported by NSF Postdoctoral Research Fellowship DMS-1304250 and NSF DMS-1600702, 
the second by NSF DMS-1502282,
the third by NSF DMS-1068190,
and the last by NSF DMS-1623035 (DMS-1501404 before transfer).

%%%%%%%%%%%%%%%%%%%%%
%%%%%%%%%%%%%%%%%%%%%

\bibliographystyle{alpha}
\bibliography{References}

%%%%%%%%%%%%%%%%%%%%%
%%%%%%%%%%%%%%%%%%%%%

\vspace{.5cm}

{\footnotesize
\noindent \textsc{Department of Mathematics, University of Kansas, Lawrence, KS 66045} \\ \indent \emph{Email address}: {\tt hernandez@ku.edu}

\vspace{.25cm}

\noindent \textsc{Centro de Investigaci\'on en Matem\'aticas, Guanajuato, Gto., M\'exico} \\ \indent \emph{Email address}:  {\tt luisnub@cimat.mx} 

\vspace{.25cm}

\noindent \textsc{Department of Mathematics and Statistics, Georgia State University, Atlanta, GA} {30303} \\ \indent  \emph{Email address}:  {\tt jperezvallejo@gsu.edu} 

\vspace{.25cm}

\noindent \textsc{Department of Mathematics, University of Kansas, Lawrence, KS 66045} \\ \indent \emph{Email address}:  {\tt witt@ku.edu} 
}

\end{document}